\DeclareMathOperator{\Ran}{Ran}
\DeclareMathOperator{\Ker}{Ker}
\DeclareMathOperator{\clos}{clos}
\DeclareMathOperator{\supp}{supp}
\DeclareMathOperator{\diag}{diag}
\DeclareMathOperator{\BMO}{BMO}
\DeclareMathOperator{\Span}{span}
\renewcommand{\Re}{\operatorname{Re}}
\renewcommand{\Im}{\operatorname{Im}}
\newcommand{\abs}[1]{\lvert#1\rvert}
\newcommand{\norm}[1]{\lVert#1\rVert}
\newcommand{\jap}[1]{\langle#1\rangle}
\newcommand{\Jap}[1]{\left\langle#1\right\rangle}
\newcommand{\bbR}{{\mathbb R}}
\newcommand{\bbC}{{\mathbb C}}
\newcommand{\bbZ}{{\mathbb Z}}
\newcommand{\calA}{\mathcal{A}}
\newcommand{\calB}{\mathcal{B}}
\newcommand{\calM}{\mathcal{M}}
\newcommand{\calC}{\mathcal{C}}
\newcommand{\calH}{\mathcal{H}}
\newcommand{\calP}{\mathcal{P}}
\newcommand{\1}{\mathbbm{1}}
\newcommand{\dd}{\mathrm d}
\newcommand{\ii}{\mathrm i}
\newcommand{\ee}{\mathrm e}
\newcommand{\oo}{\mathrm o}
\numberwithin{equation}{section}
\theoremstyle{plain}
\newtheorem{theorem}{\bf Theorem}[section]
\newtheorem*{theorem*}{Theorem}
\newtheorem{lemma}[theorem]{\bf Lemma}
\newtheorem{proposition}[theorem]{\bf Proposition}
\newtheorem*{proposition*}{\bf Proposition}
\theoremstyle{definition}
\newtheorem*{definition*}{\bf Definition}
\theoremstyle{remark}
\newtheorem*{remark*}{\bf Remark}
\newtheorem{remark}[theorem]{\bf Remark}
\newtheorem{example}[theorem]{\bf Example}
\newtheorem*{example*}{\bf Example}
\begin{document}

\title[Functional model for anti-linear operators]{A functional model and tridiagonalisation for symmetric anti-linear operators}

\author{Alexander Pushnitski}
\address{Department of Mathematics, King's College London, Strand, London, WC2R~2LS, U.K.}
\email{alexander.pushnitski@kcl.ac.uk}

\author{Franti\v{s}ek \v{S}tampach}
\address{Department of Mathematics, Faculty of Nuclear Sciences and Physical Engineering, Czech Technical University in Prague, Trojanova 13, 12000 Prague~2, Czech Republic.}
\email{stampfra@cvut.cz}

\subjclass[2020]{47B36, 47H99, 47S99}

\keywords{anti-linear operators, functional model, spectral theorem, anti-orthogonal polynomials, Jacobi matrix}

\date{August 2024}

\begin{abstract}
We consider the class of bounded symmetric anti-linear operators $B$ with a cyclic vector. We associate with $B$ the spectral data consisting of a~probability measure and a~function. In terms of the spectral data of $B$, we introduce a functional model operator $\mathcal{B}$ acting on a model space. We prove an anti-linear variant of the spectral theorem demonstrating that $B$ is unitarily equivalent to $\mathcal{B}$. Next, we show that $B$ is also unitarily equivalent to an anti-linear tridiagonal operator and discuss connection with orthogonal polynomials in the anti-linear setting.
\end{abstract}

\maketitle


\section{Introduction}
\label{sec.a}

\subsection{Motivation}\label{sec.a1}

All operators in this paper are bounded and act on infinite-dimensional separable Hilbert spaces. Our motivation comes from the well-known fact that there is a unitary equivalence between the following three classes of operators:
\begin{enumerate}[(1)]
\item
self-adjoint operators with simple spectrum;
\item
operators of multiplication by the independent variable in $L^2(\mu)$, where $\mu$ is a  probability measure on $\bbR$ of bounded 
infinite support;
\item
self-adjoint Jacobi matrices. 
\end{enumerate}
The classical reference is Stone's monograph~\cite[Chp.~7, \S~3]{sto_32}.

Our aim is to prove an analogous unitary equivalence between three classes of bounded \emph{symmetric} \emph{anti-linear} operators. In Section~\ref{sec.a2} we recall the precise statement of the equivalences of (1)--(3) in the classical (linear self-adjoint) case. In Section~\ref{sec.a3} we explain our anti-linear set-up. 
In Section~\ref{sec.a4} we indicate informally the nature of our main results and describe 
the structure of the rest of the paper.

\subsection{Linear self-adjoint case}\label{sec.a2}
Let $\calH$ be a Hilbert space, with the inner product $\jap{x,y}$, linear in $x$ and anti-linear in $y$. For a bounded self-adjoint operator $A$ in $\calH$ and a unit vector $\delta\in\calH$, the spectral measure $\mu$ of $A$ associated with $\delta$ is defined by 
\begin{equation}
\jap{f(A)\delta,\delta}=\int_{\bbR}f(\lambda)\dd\mu(\lambda), \quad \forall f\in C(\bbR);
\label{aa0}
\end{equation}
$\mu$ is a probability measure on $\bbR$ with a bounded support.

Recall that $\delta\in\calH$ is a cyclic vector of $A$ if and only if 
\[
\clos\{f(A)\delta: f\in \calP\}=\calH,
\] 
where $\calP$ denotes the set of all polynomials of one real variable with complex coefficients. If $A$ has a cyclic vector, $A$ is said to have simple spectrum. Since the dimension of $\calH$ is infinite, the support of the spectral measure associated with a cyclic vector must be an infinite set. We denote by $M_\lambda$ the operator of multiplication by the independent variable in $L^2(\mu)$ and by $1$  the function in $L^2(\mu)$ identically equal to $1$. 
The operator $M_\lambda$ is the ``functional model'' of $A$ in the following sense: 

\begin{proposition}\label{prp0}
Let $A$ be a bounded self-adjoint operator in a Hilbert space $\calH$ with a cyclic vector $\delta$, and  let $\mu$ be the corresponding spectral measure. Then there exists a unitary  operator $U:\calH\to L^2(\mu)$ such that $U\delta=1$ and $UA=M_\lambda U$. 
\end{proposition}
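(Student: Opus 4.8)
The plan is to construct the unitary $U$ directly from the functional calculus. The natural map sends $f(A)\delta$ to the function $f$ in $L^2(\mu)$; the whole proof amounts to showing this map is well-defined, isometric, and extends to a unitary with the claimed intertwining property. First I would define $U_0$ on the dense subspace $\{f(A)\delta : f\in\calP\}\subseteq\calH$ by setting $U_0\big(f(A)\delta\big) = f$, viewed as an element of $L^2(\mu)$. The key identity is the isometry computation: for polynomials $f,g$ we have
\begin{equation}
\jap{f(A)\delta, g(A)\delta} = \jap{(\bar g f)(A)\delta,\delta} = \int_{\bbR} \bar g(\lambda)f(\lambda)\,\dd\mu(\lambda) = \jap{f,g}_{L^2(\mu)},
\label{eq.plan1}
\end{equation}
where the middle equality is exactly the defining relation \eqref{aa0} for the spectral measure applied to the continuous function $\bar g f$. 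This single computation does most of the work.

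From \eqref{eq.plan1} two things follow at once. First, $U_0$ is well-defined: if $f(A)\delta = g(A)\delta$ in $\calH$, then $\|f-g\|_{L^2(\mu)}^2 = \jap{(f-g)(A)\delta,(f-g)(A)\delta}=0$, so $f=g$ in $L^2(\mu)$. Second, $U_0$ is a linear isometry from its domain into $L^2(\mu)$. Since $\delta$ is cyclic, the domain $\{f(A)\delta:f\in\calP\}$ is dense in $\calH$, so $U_0$ extends uniquely to an isometry $U:\calH\to L^2(\mu)$. To see that $U$ is surjective (hence unitary), I would note that its range is closed (being the isometric image of a complete space) and contains all polynomials $f$; since polynomials are dense in $L^2(\mu)$ — the support of $\mu$ being a bounded infinite subset of $\bbR$, so that $C(\operatorname{supp}\mu)$, and thus $\calP$, is dense by Weierstrass — the range is all of $L^2(\mu)$.

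It remains to verify the two algebraic properties. Taking $f\equiv 1$ gives $U\delta = U_0\big(1(A)\delta\big)=1$, the constant function, as required. For the intertwining relation, it suffices to check it on the dense domain: for any polynomial $f$,
\begin{equation}
UA\big(f(A)\delta\big) = U\big((\lambda f)(A)\delta\big) = \lambda f = M_\lambda f = M_\lambda U\big(f(A)\delta\big),
\label{eq.plan2}
\end{equation}
using $A f(A) = (\lambda f)(A)$ in the functional calculus and the definition of $M_\lambda$. By density and boundedness of both sides, $UA = M_\lambda U$ on all of $\calH$.

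I do not expect a genuine obstacle here; the result is classical and the argument is essentially forced once \eqref{aa0} is in hand. The only point requiring mild care is the density of polynomials in $L^2(\mu)$, which relies on $\mu$ having bounded support (so that $C(\operatorname{supp}\mu)$ is reached by Weierstrass) together with the standard density of continuous functions in $L^2$ of a finite Borel measure. The anti-linearity convention on the inner product must also be tracked consistently in \eqref{eq.plan1}, but it enters harmlessly. The genuine difficulties of the paper will arise later, in the anti-linear analogues where the functional calculus and the notion of spectral measure are no longer available in this simple form.
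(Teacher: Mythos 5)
Your proof is correct. The paper does not actually prove Proposition~\ref{prp0} at all --- it cites it as a classical special case of the spectral theorem (\cite[Thm.~VII.3]{ree-sim1_80}) --- but your argument is the standard one, and it is precisely the template the authors later follow in proving Theorem~\ref{thm1}, the anti-linear analogue (define $U$ on the dense set $\{f(A)\delta\}$, verify isometry via the defining identity for the spectral measure, use cyclicity and density of polynomials for surjectivity, then check the intertwining on the dense domain).
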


Proposition~\ref{prp0} is of course nothing but the particular case of the spectral theorem for self-adjoint operators with simple spectrum, see for example~\cite[Thm.~VII.3]{ree-sim1_80}.

Let $a_n>0$ and $b_n\in\bbR$ be bounded sequences, known as the \emph{Jacobi parameters} in this context. The corresponding \emph{Jacobi matrix} $J=J(a,b)$ is
\begin{equation}
J=\begin{pmatrix}
b_0 & a_0 & 0 & 0 & 0 &\cdots\\
a_0 & b_1 & a_1 & 0 & 0 & \cdots\\
0 & a_1 & b_2 & a_2 & 0 &\cdots\\
0 & 0 & a_2 & b_3 & a_3 & \cdots\\
\vdots&\vdots&\vdots&\vdots&\vdots&\ddots
\end{pmatrix},
\label{a0}
\end{equation}
considered as an operator on the Hilbert space $\ell^2(\bbZ_+)$, where $\bbZ_+=\{0,1,2,\dots\}$. 
It is elementary to see that $J$ is bounded and self-adjoint on $\ell^2(\bbZ_+)$ and that $e_0=(1,0,0,\dots)\in\ell^2(\bbZ_+)$ is a cyclic vector for $J$. 

\begin{proposition}\label{prp01}
Let $A$ be a bounded self-adjoint operator in a Hilbert space $\calH$ with a cyclic element $\delta$. Then there exist bounded sequences $a_n>0$, $b_n\in\bbR$ and a~unitary operator $V:\calH\to\ell^2(\bbZ_+)$ such $V\delta=e_0$ and for the corresponding Jacobi matrix $J$ we have $VA=JV$. The sequences $a_n>0$ and $b_n\in\bbR$ are uniquely defined by the above conditions.
\end{proposition}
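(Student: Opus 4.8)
The plan is to reduce everything to the functional model already established in Proposition~\ref{prp0} and then tridiagonalise the multiplication operator by Gram--Schmidt. First I would invoke Proposition~\ref{prp0} to obtain a unitary $U:\calH\to L^2(\mu)$ with $U\delta=1$ and $UA=M_\lambda U$, so it suffices to construct a unitary $W:L^2(\mu)\to\ell^2(\bbZ_+)$ carrying $M_\lambda$ to a Jacobi matrix and $1$ to $e_0$; then $V=WU$ does the job. Since $\mu$ has infinite support, the monomials $1,\lambda,\lambda^2,\dots$ are linearly independent in $L^2(\mu)$, so applying Gram--Schmidt (with the convention of positive leading coefficient) yields orthonormal polynomials $p_n$, where $\deg p_n=n$. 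Cyclicity of $\delta$ means $\clos\{f(A)\delta:f\in\calP\}=\calH$, which under $U$ says precisely that $\calP$ is dense in $L^2(\mu)$; hence $\{p_n\}_{n\ge0}$ is an orthonormal \emph{basis}. Defining $W$ by $Wp_n=e_n$ then gives a unitary with $W\cdot 1=Wp_0=e_0$.

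The next step is the three-term recurrence. Since $\lambda$ is real, $M_\lambda$ is self-adjoint on $L^2(\mu)$, so $\jap{M_\lambda p_n,p_m}=\jap{p_n,M_\lambda p_m}$. As $M_\lambda p_m$ has degree $m+1$, this inner product vanishes whenever $m\le n-2$, and therefore $M_\lambda p_n=a_n p_{n+1}+b_n p_n+a_{n-1}p_{n-1}$ for suitable scalars. The symmetry of the off-diagonal coefficient ($a_{n-1}$ appearing both at level $n$ and level $n-1$) follows from self-adjointness, namely $\jap{M_\lambda p_n,p_{n-1}}=\jap{M_\lambda p_{n-1},p_n}$; the coefficient $a_n=\jap{M_\lambda p_n,p_{n+1}}$ is strictly positive because it equals the ratio of the (positive) leading coefficients of $p_n$ and $p_{n+1}$; and $b_n=\jap{M_\lambda p_n,p_n}\in\bbR$ again by self-adjointness. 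Boundedness of $a_n$ and $b_n$ is immediate from $\norm{M_\lambda}=\norm{A}<\infty$. This recurrence says exactly that, in the basis $\{p_n\}$, $M_\lambda$ is represented by the Jacobi matrix $J=J(a,b)$, i.e.\ $WM_\lambda=JW$; combined with $UA=M_\lambda U$ this yields $VA=JV$ for $V=WU$.

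For uniqueness, suppose $a_n'>0$, $b_n'\in\bbR$ and a unitary $V'$ also satisfy $V'\delta=e_0$ and $V'A=J'V'$ with $J'=J(a',b')$. Then $V'(A^k\delta)=(J')^k e_0$ for all $k$, and since $V'$ is unitary it intertwines the Gram--Schmidt orthonormalisation of the Krylov sequence $\{A^k\delta\}_{k\ge0}$ in $\calH$ with that of $\{(J')^k e_0\}_{k\ge0}$ in $\ell^2(\bbZ_+)$. The latter orthonormalisation is forced to be $e_0,e_1,e_2,\dots$: each $(J')^k e_0$ lies in $\Span\{e_0,\dots,e_k\}$ with coordinate $\jap{(J')^k e_0,e_k}=a_0'a_1'\cdots a_{k-1}'>0$, so the positivity convention selects precisely $e_k$. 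Hence $V'$ is uniquely determined on the dense Krylov span, so $V'$ is unique, and consequently $J'=V'A(V')^{-1}$ together with its parameters $a_n',b_n'$ is unique. I expect this last step to be the main obstacle: the existence half is essentially routine Gram--Schmidt, but the uniqueness hinges on carefully tracking how the sign normalisation $a_n>0$ pins down the Gram--Schmidt vectors, and hence $V$, with no residual freedom.
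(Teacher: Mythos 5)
Your proof is correct, and it follows the same route the paper has in mind: the paper does not prove Proposition~\ref{prp01} itself (it refers to Akhiezer), but the reduction to $L^2(\mu)$ via Proposition~\ref{prp0}, the Gram--Schmidt construction of the orthonormal polynomials $p_n$, and the three-term recurrence \eqref{a01} with the normalisation $a_n>0$ are exactly what is sketched in Section~\ref{sec.a2}, and the same strategy (including your uniqueness argument via the forced identification of the orthonormalised Krylov sequence with $\{e_n\}$) is the one carried out in detail for the anti-linear analogue in Section~\ref{sec.c}.
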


One sometimes refers to this statement as a \emph{tridiagonalisation} of $A$; see for example~\cite[Thm.~4.2.3]{akh_21} for the proof for unbounded self-adjoint operators with simple spectrum.

Proposition~\ref{prp01} is closely related to the theory of orthogonal polynomials. 
Let $\mu$ be the \emph{spectral measure} of $J$, defined by \eqref{aa0} with $A=J$ and $\delta=e_0$.
The Jacobi parameters $a$, $b$ of $J$ can be reconstructed from the measure $\mu$ as follows. 
Applying the Gram--Schmidt process to the monomials $1,\lambda,\lambda^2,\dots$ in $L^2(\mu)$, one obtains the sequence $\{p_n\}_{n=0}^\infty$ of normalised polynomials with $\deg p_n=n$. The polynomials $p_n$ are uniquely defined by the measure $\mu$, up to a unimodular multiplicative factor. 
It is easy to see that $p_n$ satisfy the three-term recurrence relation 
\begin{equation}
\lambda p_n(\lambda)=a_np_{n+1}(\lambda)+b_np_n(\lambda)+a_{n-1}p_{n-1}(\lambda), \quad n\geq0,
\label{a01}
\end{equation}
with the convention $a_{-1}=p_{-1}(\lambda)=0$.
Conditions $a_n>0$ and $p_{0}(\lambda)=1$ fix the choice of the unimodular factor in the definition of $p_n$. Thus, we can read off the Jacobi parameters $a_n$ and $b_n$ from \eqref{a01}.

\subsection{The anti-linear set-up}\label{sec.a3}

In what follows, $B$ is a bounded \emph{anti-linear} operator on a separable Hilbert space $\calH$, i.e. $B$ satisfies
\[
B(\alpha x+\beta y)=\overline{\alpha}Bx+\overline{\beta}By
\]
for all $x,y\in\calH$ and $\alpha,\beta\in\bbC$.
 We will assume that $B$ is \emph{symmetric}, i.e. it satisfies
\begin{equation}
\jap{Bx,y}=\jap{By,x}, \quad \forall x,y\in\calH.
\label{a1}
\end{equation}
\begin{example*}
Let $\calH=\ell^2(\bbZ_+)$ and let $\calC_{\ell^2}$ be the operator of complex conjugation $x\mapsto\overline{x}$ on $\ell^2(\bbZ_+)$. Furthermore, let $A$ be a~bounded \emph{linear} operator on $\ell^2(\bbZ_+)$. Then the anti-linear operator $A\calC_{\ell^2}$ is symmetric in the above sense if and only if the matrix of $A$ in the standard basis is symmetric, $A=A^T$. 
\end{example*}
This example can be made a little more general. Let $\calC$ be a \emph{conjugation} on a~Hilbert space $\calH$, i.e. $\calC$ is an anti-linear isometric involution ($\calC=\calC_{\ell^2}$ is a typical example).
For a bounded \emph{linear} operator $A$ on $\calH$, one says that $A$ is \emph{$\calC$-symmetric}, if $\calC A=A^*\calC$. Then elementary algebra shows that the anti-linear operator $B=A\calC$ is symmetric in the sense \eqref{a1} if and only if $A$ is $\calC$-symmetric. 
The theory of $\calC$-symmetric (also known as \emph{complex-symmetric}) operators has received increased attention lately, see~\cite{gar-put_06,gar-put_07}; we will come back to this viewpoint in Section~\ref{sec.a10}. 

Let us discuss functional calculus for symmetric anti-linear operators. 
Observe that $B^2$ is linear; moreover, by the symmetry we have
\[
\jap{B^2x,x}=\jap{B(Bx),x}=\jap{Bx,Bx}\geq0,
\]
i.e. $B^2$ is self-adjoint and positive semi-definite. More generally, $B^{2n}$ is linear self-adjoint and positive semi-definite, while $B^{2n+1}$ is anti-linear for all $n\geq0$. 
For a~polynomial $f\in\calP$ we define, as usual, 
\[
f(B)=\sum_n a_n B^n, \quad \text{ if }\quad  f(t)=\sum_n a_n t^n. 
\]
The even terms in this sum are linear, while the odd ones are anti-linear. 

Let $\delta$ be a unit vector in $\calH$. 
As in the linear case, we will say that $\delta$ is \emph{cyclic} for $B$ if 
\begin{equation}
\clos\{f(B)\delta: f\in \calP\}=\calH.
\label{a6}
\end{equation}
In more detail, this condition can be rewritten as
\begin{equation}
\clos\{f(B^2)\delta+g(B^2)B\delta: f,g\in\calP\}=\calH.
\label{a6a}
\end{equation}
\begin{remark*}
Even though this is not important for what comes next, we remark that if some element $x\in\calH$ can be represented as $x=f(B^2)\delta+g(B^2)B\delta$ with some polynomials $f$ and $g$, then the choice of these polynomials is unique in this representation.  This fact is not entirely obvious and follows from Lemma~\ref{lma.d1} below.  
\end{remark*}

\subsection{Informal description of main results}\label{sec.a4}
Let $B$ be a bounded anti-linear symmetric operator with a cyclic vector. 
It turns out that it is natural to replace one spectral measure $\mu$ (see \eqref{aa0}) with a pair $(\nu,\psi)$, where $\nu$ is a~measure and $\psi$ is a~function. We call the pair $(\nu,\psi)$ the \emph{spectral data} for $B$. In Section~\ref{sec.a5}, we define $\nu$ and $\psi$ and in Section~\ref{sec.a5half} we put them in the context of spectral theory of $B$. Furthermore, in Section~\ref{sec.a6} we define a \emph{functional model} $\calB$ (analogue of multiplication operator) in terms of the spectral data $(\nu,\psi)$.

Our first main result is \textbf{Theorem~\ref{thm1}}; it is stated and proved in Section~\ref{sec.a6a}. 
Similarly to Proposition~\ref{prp0}, it asserts that $B$ is unitarily equivalent to its functional model $\calB$. 
We end Section~\ref{sec.2} with concluding remarks and a brief review of existing literature on anti-linear operators. 

Our second main result is \textbf{Theorem~\ref{thm2}}; it is stated in Section~\ref{sec.a7} and proved in Section~\ref{sec.c}.
Similarly to Proposition~\ref{prp01}, it asserts that $B$ is unitarily equivalent to an anti-linear version of a Jacobi matrix, i.e. $B$ admits tridiagonalisation. 

Our key tool in the proof of Theorem~\ref{thm2} is the sequence of polynomials that we call \emph{anti-orthogonal} (for the want of a better term). These are the polynomials $q_n$ satisfying the three-term recurrence relation 
\[
sq_{n}^{\ast}(s)=a_nq_{n+1}(s)+b_nq_n(s)+a_{n-1}q_{n-1}(s), \quad n\geq0,
\]
with the usual conventions $a_{-1}=q_{-1}(s)=0$ and $q_0(s)=1$, where $q_{n}^{\ast}(s):=\overline{q_{n}(\bar{s})}$, i.e. $q_{n}^{\ast}$ is the polynomial $q_{n}$ with \emph{complex conjugated coefficients}. Of course, there is no difference between $q^{\ast}(s)$ and $\overline{q(s)}$ when $s$ is confined to the reals which is the case below. Here $a_n>0$ but $b_n$ may be complex! These polynomials are discussed in Section~\ref{sec.c}. 

Finally, in Section~\ref{sec.d} we discuss an explicit example.

\subsection{Acknowledgements} 
A.P. is grateful to Sergei Treil for useful discussions. F.~{\v S}. acknowledges the support of the EXPRO grant No.~20-17749X of the Czech Science Foundation. The authors are grateful to the referee for useful remarks.

\section{The spectral data and a functional model}\label{sec.2}

\subsection{The spectral data}\label{sec.a5}
In what follows, $B$ is a bounded anti-linear symmetric operator in $\calH$ with a cyclic unit vector $\delta$. As already discussed, the operator $B^2$ is (linear) positive semi-definite and therefore $\abs{B}=\sqrt{B^2}$ is well-defined in terms of the usual functional calculus for self-adjoint operators. Notice that $|B|$ and $B$ commute (because $B^2$ and $B$ commute). 

We define the probability measure $\nu$ on $[0,\infty)$ by 
\begin{equation}
\jap{f(\abs{B})\delta,\delta}=\int_0^\infty f(s)\dd\nu(s), \quad \forall f\in C(\bbR).
\label{a2}
\end{equation}
In other words, $\nu$ is the spectral measure of $|B|$ associated with the cyclic vector~$\delta$.

\begin{proposition}\label{prp.a3}
There exists a unique complex-valued function $\psi\in L^\infty(s\dd\nu(s))$ such that
\begin{equation}
\jap{f(\abs{B})B\delta,\delta}=\int_0^\infty sf(s)\psi(s)\dd\nu(s), \quad \forall f\in C(\bbR).
\label{a3}
\end{equation}
The function $\psi$ satisfies $\abs{\psi(s)}\leq1$, $\nu$-a.e. $s>0$. 
\end{proposition}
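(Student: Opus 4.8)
The plan is to reduce everything to the linear spectral theorem (Proposition~\ref{prp0}) for the bounded self-adjoint operator $\abs{B}$. First I record two elementary facts. Since $\abs{B}=\sqrt{B^2}$ is a norm limit of polynomials in $B^2$ and $B$ commutes with $B^2$, the operator $\abs{B}$ commutes with $B$; more generally $B\,h(\abs{B})=\overline{h}(\abs{B})\,B$ for every bounded Borel function $h$, where $\overline h(s)=\overline{h(s)}$ (the complex conjugation reflecting the anti-linearity of $B$). Second, using symmetry \eqref{a1} together with $B^2=\abs{B}^2$,
\[
\norm{Bu}^2=\jap{Bu,Bu}=\jap{B^2u,u}=\jap{\abs{B}^2u,u}=\norm{\abs{B}u}^2,
\]
so $\norm{Bu}=\norm{\abs{B}u}$ for all $u\in\calH$.

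Next I apply the spectral theorem to $\abs{B}$ on the cyclic subspace $\calH_0:=\clos\{g(\abs{B})\delta:g\in\calP\}$: there is a unitary $U\colon\calH_0\to L^2(\nu)$ with $U\delta=1$ and $U\abs{B}U^{*}=M_s$, the spectral measure of $\abs{B}\restriction\calH_0$ at $\delta$ being exactly $\nu$ by \eqref{a2}. The subtlety is that $B\delta$ need not lie in $\calH_0$, so I set $w:=P_{\calH_0}B\delta$ and $\phi:=Uw\in L^2(\nu)$. Since $\calH_0$ reduces $\abs{B}$, hence all of its Borel functional calculus, and $\delta\in\calH_0$, for every $f\in C(\bbR)$ one has
\[
\jap{f(\abs{B})B\delta,\delta}=\jap{f(\abs{B})w,\delta}=\jap{M_f\phi,1}_{L^2(\nu)}=\int_0^\infty f(s)\phi(s)\dd\nu(s).
\]

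The heart of the argument is the pointwise bound $\abs{\phi(s)}\le s$, $\nu$-a.e. For real $f\in C(\bbR)$ the commutation relation gives $f(\abs{B})B\delta=Bf(\abs{B})\delta$, and then $\norm{Bu}=\norm{\abs{B}u}$ yields $\norm{f(\abs{B})B\delta}^2=\norm{\abs{B}f(\abs{B})\delta}^2=\int_0^\infty s^2f(s)^2\dd\nu(s)$. On the other hand, the orthogonal splitting $B\delta=w+(B\delta-w)$ forces $\norm{f(\abs{B})w}^2\le\norm{f(\abs{B})B\delta}^2$, that is $\int f^2\abs{\phi}^2\dd\nu\le\int f^2s^2\dd\nu$. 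As $f^2$ ranges over all nonnegative continuous functions, this compels $\abs{\phi(s)}^2\le s^2$ $\nu$-a.e. In particular $\phi=0$ $\nu$-a.e. on $\{s=0\}$, so the atom at $0$, if any, does not contribute to the integral above.

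Finally I set $\psi(s):=\phi(s)/s$ for $s>0$, which satisfies $\abs{\psi}\le1$ and hence lies in $L^\infty(s\dd\nu)$; substitution then reproduces \eqref{a3} for all $f\in C(\bbR)$. Uniqueness is immediate: if $\psi_1,\psi_2$ both satisfy \eqref{a3}, then $\int f(\psi_1-\psi_2)s\dd\nu=0$ for all $f\in C(\bbR)$, and density of $C(\bbR)$ in $L^2(s\dd\nu)$ gives $\psi_1=\psi_2$ in $L^\infty(s\dd\nu)$. I expect the only genuine obstacle to be the passage from the family of integral inequalities to the pointwise estimate $\abs{\phi}\le s$, together with the bookkeeping at $s=0$; the commutation identity $B\,h(\abs{B})=\overline h(\abs{B})\,B$ also merits a careful one-line justification via the functional calculus.
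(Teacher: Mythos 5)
Your proof is correct, but it takes a genuinely different route from the paper's. The paper (following the argument of \cite[Theorem 2.2]{pus-sta_24}) factorises $f=hg$ with $g=s^{1/2}\abs{f}^{1/2}$ and $h=s^{-1/2}f/\abs{f}^{1/2}$, computes $\norm{h(\abs{B})B\delta}^2=\norm{g(\abs{B})\delta}^2=\int_0^\infty s\abs{f}\,\dd\nu$ using the symmetry \eqref{a1}, and applies Cauchy--Schwarz to obtain the a priori estimate $\abs{\jap{f(\abs{B})B\delta,\delta}}\le\int_0^\infty s\abs{f(s)}\,\dd\nu(s)$; existence and uniqueness of $\psi\in L^\infty(s\,\dd\nu)$ with $\abs{\psi}\le1$ then follow from $L^1$--$L^\infty$ duality. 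You instead realise $\psi$ concretely: passing to the spectral representation of $\abs{B}$ on the cyclic subspace $\calH_0$ generated by $\delta$, you set $\phi=U(P_{\calH_0}B\delta)$ and derive the pointwise bound $\abs{\phi(s)}\le s$ from the commutation identity $f(\abs{B})B\delta=Bf(\abs{B})\delta$ (for real $f$), the identity $\norm{Bu}=\norm{\abs{B}u}$, and the norm-decreasing property of the orthogonal projection. Both arguments are complete and of comparable length. Yours buys an explicit description of $\psi$ as $U(P_{\calH_0}B\delta)/s$, which makes transparent the geometric meaning of the condition $\abs{\psi(s)}=1$ exploited later in Proposition~\ref{prp.a4} (multiplicity of the spectrum of $\abs{B}$); the paper's estimate-plus-duality argument is softer, requires no discussion of reducing subspaces, and works entirely at the level of quadratic forms. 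The two technical points you flag are indeed the only ones needing care, and both are handled adequately: the passage from $\int f^2\abs{\phi}^2\,\dd\nu\le\int f^2s^2\,\dd\nu$ for all real continuous $f$ to $\abs{\phi}\le s$ $\nu$-a.e.\ is standard (regularity of $\nu$ plus Urysohn), and uniqueness follows since the complex measure $(\psi_1-\psi_2)s\,\dd\nu$ annihilates all continuous functions on the compact support of $\nu$ and is therefore zero.
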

\begin{proof}
The proof follows the argument of \cite[Theorem 2.2]{pus-sta_24}. 
For $f$ vanishing near the origin, we decompose $f(s)=h(s)g(s)$, where $g(s):=s^{1/2}\abs{f(s)}^{1/2}$ and 
\[
 h(s):= \begin{cases}
 s^{-1/2}f(s)/|f(s)|^{1/2},& \mbox{ if } f(s)\neq0,\\
 0,& \mbox{ if } f(s)=0.
 \end{cases}
\]
Then we find 
\begin{equation}
\jap{f(\abs{B})B\delta,\delta}
=
\jap{g(\abs{B})h(\abs{B})B\delta,\delta}
=
\jap{h(\abs{B})B\delta,g(\abs{B})\delta}.
\label{a5}
\end{equation}
In what follows, the bar over a function, say $f$, denotes the complex conjugated function defined as $\overline{f}:s\mapsto\overline{f(s)}$. By using \eqref{a1}, we get
\begin{align*}
\norm{h(\abs{B})B\delta}^2
&=
\jap{B\delta,\overline{h}(\abs{B})h(\abs{B})B\delta}
=
\jap{B\overline{h}(\abs{B})h(\abs{B})B\delta,\delta}
\\
&=
\jap{\overline{h}(\abs{B})h(\abs{B})B^2\delta,\delta}
=
\int_0^\infty s^2\abs{h(s)}^2\dd\nu(s)
=
\int_0^\infty s\abs{f(s)}\dd\nu(s),
\\
\norm{g(\abs{B})\delta}^2
&=
\int_0^\infty \abs{g(s)}^2\dd\nu(s)
=
\int_0^\infty s\abs{f(s)}\dd\nu(s).
\end{align*}
Putting this together and using Cauchy--Schwarz for the right hand side of \eqref{a5}, we obtain the estimate
\[
\abs{\jap{f(\abs{B})B\delta,\delta}}
\leq
\int_0^\infty s\abs{f(s)}\dd\nu(s). 
\]
From here we obtain the desired representation \eqref{a3} with a unique $\psi\in L^\infty(s\dd\nu(s))$ satisfying $\abs{\psi(s)}\leq1$ for $\nu$-a.e. $s>0$. 
The proof of Proposition~\ref{prp.a3} is complete. 
\end{proof}

\begin{definition*}
We call the pair $(\nu,\psi)$, where $\nu$ is defined by~\eqref{a2} and $\psi$ by Proposition~\ref{prp.a3}, the \emph{spectral data} of $B$.
\end{definition*}
\begin{remark*}
\begin{enumerate}[1.]
\item
By a simple approximation argument, we find that \eqref{a3} also holds for any bounded Borel function $f$; we will use this fact below. 
\item
Observe that the spectral data of $B$ is not a unitary invariant and is defined with reference to a distinguished cyclic vector $\delta$. For example, replacing $\delta$ by $\ee^{\ii\alpha}\delta$ results in replacing $\psi(s)$ by $\ee^{-2\ii\alpha}\psi(s)$. 
\item
The proof of Proposition~\ref{prp.a3} does not use the cyclicity assumption, and so the spectral data $(\nu,\psi)$ can be defined for any bounded symmetric anti-linear $B$ with reference to any vector $\delta$. However, the cyclicity assumption allows us to put the spectral data in the context of the spectral theory of $B$ (more precisely, of $\abs{B}$), as explained in the next subsection.
\end{enumerate}
\end{remark*}

\subsection{The multiplicity of spectrum of $\abs{B}$}\label{sec.a5half}
The purpose of this subsection is (i) to give some substance to the claim that our spectral data is a natural object in the spectral theory of $B$ and (ii) to provide context for the construction of the functional model in the next subsection. 

For a self-adjoint operator $T$, we denote by $E_T(\Delta)$ the spectral projection of $T$ corresponding to a Borel set $\Delta\subset\bbR$. We recall that a vector $x$ is called an element of \emph{maximal type} for $T$, if for any Borel set $\Delta$ we have the implication
\[
\jap{E_T(\Delta)x,x}=0 \quad\Rightarrow\quad E_T(\Delta)=0;
\]
see~\cite[Chap.~7, Sec.~3]{bir-sol_87} for general theory.
The function $\psi\in L^\infty(s\dd\nu(s))$ is defined up to sets of $\nu$-measure zero. In the next proposition, we fix a representative $\psi$ such that $\psi(s)$ is defined for all $s>0$ and satisfies $\abs{\psi(s)}\leq1$ for all $s>0$. 

\begin{proposition}\label{prp.a4}
The vector $\delta$ is an element of maximal type for $\abs{B}$.
The spectrum of $\abs{B}$ has 
\begin{align}
\text{multiplicity one on } S_1:=\{s>0:\abs{\psi(s)}=1\},
\label{a8}
\\
\text{multiplicity two on } S_2:=\{s>0:\abs{\psi(s)}<1\}.
\label{a9}
\end{align}
If $s=0$ is an eigenvalue of $\abs{B}$, then it has multiplicity one. 
\end{proposition}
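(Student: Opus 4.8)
The plan is to prove the maximal type first, since it is what certifies $\nu$ as a base measure, and then to read off the multiplicities from a $2\times2$ matrix-valued spectral measure built out of the spectral data. A preliminary remark I will use repeatedly: for a real-valued polynomial (or bounded Borel function) $f$, the operator $B$ commutes with $f(\abs{B})$. Indeed $B$ commutes with $\abs{B}$, hence with every $\phi(\abs{B})$ realised as a strong limit of real polynomials in $\abs{B}$; anti-linearity causes no trouble since $B$ is bounded and the coefficients are real. In particular $B$ commutes with each spectral projection $E_{\abs{B}}(\Delta)$.

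First I would settle the maximal type. Suppose $\jap{E_{\abs{B}}(\Delta)\delta,\delta}=\nu(\Delta)=0$; then $E_{\abs{B}}(\Delta)\delta=0$, and by the commutation above $E_{\abs{B}}(\Delta)B\delta=BE_{\abs{B}}(\Delta)\delta=0$. Commuting $E_{\abs{B}}(\Delta)$ through $f(B^2)=f(\abs{B}^2)$ and $g(B^2)$ then shows that $E_{\abs{B}}(\Delta)$ annihilates every element of the dense set in \eqref{a6a}, so $E_{\abs{B}}(\Delta)=0$ by cyclicity. Hence $\delta$ is of maximal type, and consequently $\nu$ is a scalar spectral measure of maximal type for $\abs{B}$.

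Next I would observe that, by the Stone--Weierstrass theorem, polynomials in $s^2$ are dense in $C([0,\norm{B}])$, so $\clos\{f(B^2)\delta:f\in\calP\}$ is exactly the $\abs{B}$-cyclic subspace generated by $\delta$, and likewise for $B\delta$. Thus \eqref{a6a} says that $\{\delta,B\delta\}$ is a generating system of two vectors for the self-adjoint operator $\abs{B}$, so its spectral multiplicity is at most $2$ everywhere. To locate it I compute the three entries of the associated matrix measure relative to the base measure $\nu$. From \eqref{a2} and \eqref{a3} one has $\jap{f(\abs{B})\delta,\delta}=\int f\dd\nu$ and $\jap{f(\abs{B})B\delta,\delta}=\int sf\psi\dd\nu$; the last diagonal entry comes from symmetry, by writing $f(\abs{B})B\delta=Bf(\abs{B})\delta$ for real $f$ and using \eqref{a1} in the form $\jap{Bu,Bv}=\jap{B^2v,u}$ together with $B^2=\abs{B}^2$:
\begin{equation*}
\jap{f(\abs{B})B\delta,B\delta}=\jap{B^2\delta,f(\abs{B})\delta}=\int_0^\infty s^2f(s)\dd\nu(s).
\end{equation*}
Consequently the Hermitian density matrix of the matrix-valued spectral measure of $(\delta,B\delta)$ with respect to $\nu$ is
\begin{equation*}
G(s)=\begin{pmatrix}1 & s\overline{\psi(s)}\\ s\psi(s) & s^2\end{pmatrix},\qquad \det G(s)=s^2\left(1-\abs{\psi(s)}^2\right).
\end{equation*}
By the standard description of spectral multiplicity for a finite generating system (see~\cite[Chap.~7]{bir-sol_87}), the local multiplicity of $\abs{B}$ at $s$ equals $\operatorname{rank}G(s)$ for $\nu$-a.e.\ $s$. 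On $S_1$ we have $\det G(s)=0$ while $G(s)\neq0$, so the rank is $1$; on $S_2$, where $s>0$, we have $\det G(s)>0$, so the rank is $2$; and at $s=0$, $G(0)=\diag(1,0)$ has rank $1$, so $0$, if it is an eigenvalue, is simple. This yields \eqref{a8}, \eqref{a9} and the final assertion.

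The step I expect to require the most care is the passage from the generating pair to the multiplicity via $\operatorname{rank}G(s)$: one must verify that $\nu$ genuinely dominates all entries of the matrix measure (which is exactly what the maximal type of $\delta$ guarantees, hence the order in which I prove the statements) and that the pointwise rank of the density matrix does compute the fibre dimension $\nu$-a.e., which is the content of matrix-measure multiplicity theory. The anti-linearity of $B$ enters only through the commutation with real functions of $\abs{B}$ and the elementary symmetry identity $\jap{Bu,Bv}=\jap{B^2v,u}$.
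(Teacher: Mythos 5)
Your proposal is correct, and while the maximal-type part coincides with the paper's argument (annihilate $\delta$ and $B\delta$ by $E_{\abs{B}}(\Delta)$, commute through $f(B^2)$, $g(B^2)$, invoke \eqref{a6a}), the multiplicity part takes a genuinely different route. The paper works by hand: it tests whether $x=f(\abs{B})B\delta$ lies in $\Span_{\abs{B}}(\delta)$ by computing $\norm{x}$, $\norm{y}$ and $\jap{x,y}$ for $y=g(\abs{B})\delta$ and checking whether Cauchy--Schwarz is an equality (on $S_1$, with the specific choice $g(s)=sf(s)\psi(s)$) or is strict (on $S_2$), and it treats the eigenvalue at $s=0$ by a separate direct argument showing $\Ker B\cap\{\delta\}^\perp=\{0\}$. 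You instead package the same three scalar computations --- the densities $1$, $s\psi(s)$, $s^2$ of $\jap{E_{\abs{B}}(\dd s)x_j,x_k}$ for the generating pair $(\delta,B\delta)$, all of which you derive correctly from \eqref{a2}, \eqref{a3} and the symmetry \eqref{a1} --- into the Gram density $G(s)$ and read the multiplicity off as $\operatorname{rank}G(s)$, with $\det G(s)=s^2(1-\abs{\psi(s)}^2)$ cleanly separating $S_1$ from $S_2$ and handling $s=0$ uniformly. What your approach buys is conceptual economy and a single mechanism covering all three assertions (it also makes transparent why the model space $\calM$ and the matrix in \eqref{b1} look the way they do); what it costs is reliance on the imported theorem that the local multiplicity of a finite generating system equals the a.e.\ rank of the density matrix of its matrix spectral measure with respect to a dominating measure of maximal type --- a standard fact from the multiplicity theory in \cite{bir-sol_87}, but one you must cite precisely, whereas the paper's Cauchy--Schwarz argument is entirely self-contained. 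You correctly identify and discharge the two hypotheses that theorem needs: that $\{\delta,B\delta\}$ generates (via Stone--Weierstrass applied to polynomials in $s^2$ and \eqref{a6a}) and that $\nu$ dominates all entries (via the maximal type of $\delta$, which is why your ordering of the steps is the right one). I see no gap.
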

\begin{proof}
First let us prove that $\delta$ is of maximal type for $\abs{B}$. Suppose $\nu(\Delta)=0$ for some Borel set $\Delta$. Then 
\[
\norm{E_{\abs{B}}(\Delta)\delta}^2=\nu(\Delta)=0
\quad \text{ and }\quad
\norm{E_{\abs{B}}(\Delta)B\delta}^2=\int_{\Delta}s^2\dd\nu(s)=0. 
\]
Since $\abs{B}$ commutes with $B^2$, it follows that for any polynomials $f$, $g$ we have 
\[
E_{\abs{B}}(\Delta)(f(B^2)\delta+g(B^2)B\delta)
=
f(B^2)E_{\abs{B}}(\Delta)\delta+g(B^2)E_{\abs{B}}(\Delta)B\delta=0.
\]
By the cyclicity assumption \eqref{a6a}, it follows that $E_{\abs{B}}(\Delta)=0$; thus, $\delta$ if of maximal type. 

Let us prove \eqref{a8}. 
We start with a trivial but important remark: since $B$ commutes with $\abs{B}$, any spectral subspace of $\abs{B}$ (i.e. any subspace of the form $\Ran E_{\abs{B}}(\Delta)$ with a Borel set $\Delta\subset\bbR$) is invariant for $B$. 
Let $f$ be a continuous function with $\supp f\subset S_1$; consider $x=f(\abs{B})B\delta$. Let us check that $x$ is in the space 
\[
\Span_{\abs{B}}(\delta):=\clos\{p(|B|)\delta: p\in\calP\}.
\]
Take $g(s)=sf(s)\psi(s)$ and consider $y=g(\abs{B})\delta$. Then 
\[
\norm{y}^2
=\int_0^\infty s^2\abs{f(s)}^2\abs{\psi(s)}^2\dd\nu(s)
=\int_0^\infty s^2\abs{f(s)}^2\dd\nu(s)
\]
because by assumption $\abs{\psi}=1$ on $\supp f$. Further, 
\[
\norm{x}^2=\int_0^\infty s^2\abs{f(s)}^2\dd\nu(s)
\]
and finally
\[
\jap{x,y}=\jap{\overline{g}(\abs{B})f(\abs{B})B\delta,\delta}
=
\int_0^\infty s\psi(s)\overline{g}(s)f(s)\dd\nu(s)
=
\int_0^\infty s^2\abs{f(s)}^2\dd\nu(s).
\]
We conclude that 
\[
\jap{x,y}
=
\norm{x}\norm{y};
\]
it follows that $x$ and $y$ are collinear, i.e. $x\in\Span_{\abs{B}}(\delta)$ as claimed.

Let us prove \eqref{a9}. Since $\delta$ is of maximal type, it suffices to prove that for any non-zero continuous $f$ with $\supp f\subset S_2$, the element $x=f(\abs{B})B\delta$ is not in $\Span_{\abs{B}}(\delta)$. 
For any $g$ and $y=g(\abs{B})\delta$ we have, similarly to the first part of the proof, 
\begin{align*}
\norm{x}^2&=\int_0^\infty s^2\abs{f(s)}^2\dd\nu(s),
\\
\norm{y}^2&=\int_0^\infty \abs{g(s)}^2\dd\nu(s),
\\
\jap{x,y}&=\jap{f(\abs{B})B\delta,g(\abs{B})\delta}
=
\int_0^\infty f(s)\overline{g}(s)s\psi(s)\dd\nu(s),
\end{align*}
and so
\[
\abs{\jap{x,y}}
\leq
\int_0^\infty \abs{f(s)}\abs{g(s)}\abs{\psi(s)}s\dd\nu(s)
<
\int_0^\infty \abs{f(s)}\abs{g(s)}s\dd\nu(s)
\leq
\norm{x}\norm{y},
\]
by using Cauchy-Schwarz at the last step. 
It follows that $x$ and $y$ are not collinear. Since $g$ is arbitrary, we find that $x\notin\Span_{\abs{B}}(\delta)$, as required.

Finally, suppose that $s=0$ is an eigenvalue of $\abs{B}$. Observe that $\abs{B}x=0$ if and only if $Bx=0$. Let us prove that the kernel of $B$ is one-dimensional. Suppose $Bx=0$ and $x\perp \delta$; then for any polynomials $f$ and $g$ we have
\[
\jap{f(B^2)\delta+g(B^2)B\delta,x}=f(0)\jap{\delta,x}+g(0)\jap{B\delta,x}=g(0)\jap{Bx,\delta}=0,
\]
and therefore $x=0$ by the cyclicity assumption \eqref{a6a}. 
Thus, $\Ker B\cap \{\delta\}^\perp=\{0\}$. 
It follows that $\Ker B$ is one-dimensional, because in any subspace of dimension $\geq2$ one can always find non-zero elements orthogonal to a given vector. 

The proof of Proposition~\ref{prp.a4} is complete. 
\end{proof}

\begin{remark}\label{rmk.psi}
Recall that $\psi$ is defined by \eqref{a3} as an element of $L^\infty(s\dd\nu(s))$, and in particular $\psi(0)$ is undefined. 
However, if $\nu$ has a point mass at $s=0$ (i.e., if $s=0$ is an eigenvalue of $\abs{B}$), it will be convenient to formally set $\psi(0)=1$. This convention allows to rewrite Proposition~\ref{prp.a4} in a more concise form: \emph{the spectrum of $\abs{B}$ has 
\begin{align}
\text{multiplicity one on } S_1:=\{s\geq0:\abs{\psi(s)}=1\},
\label{a8a}
\\
\text{multiplicity two on } S_2:=\{s\geq0:\abs{\psi(s)}<1\}. \nonumber
\end{align}
}
\end{remark}

\subsection{The functional model $\calB$}\label{sec.a6}
Let $\nu$ be a probability measure on $[0,\infty)$ and let $\psi\in L^\infty(\nu)$ be a function satisfying $\abs{\psi(s)}\leq1$ for $\nu$-a.e. $s>0$ and $\psi(0)=1$ (the last condition is only needed if $\nu(\{0\})>0$).
Let $L^2(\nu;\bbC^2)$ be the Hilbert space of $\bbC^2$-valued functions $f=\begin{pmatrix}f_1\\ f_2\end{pmatrix}$ on $[0,\infty)$ with the norm
\[
\norm{f}^2=\int_0^\infty (\abs{f_1(s)}^2+\abs{f_2(s)}^2)\dd\nu(s).
\]
We consider the subspace 
\[
 \calM:=\left\{ f\in L^2(\nu;\bbC^2): f_{2}\equiv0 \mbox{ on } S_{1}\right\},
\]
where the set $S_1$ is as in \eqref{a8a}.
\begin{remark*}
It is easy to see that the space $\calM$ is finite-dimensional if and only if $\nu$ is a finite linear combination of point masses, i.e. of finite support.
\end{remark*}
Next, we consider the bounded anti-linear operator $\calB$ on $\mathcal M$, 
\begin{equation}
\boxed{
(\calB f)(s):=
\begin{pmatrix}
\psi(s)&\sqrt{1-\abs{\psi(s)}^2}
\\
\sqrt{1-\abs{\psi(s)}^2}&-\overline{\psi(s)}
\end{pmatrix}
\begin{pmatrix}
s\overline{f_1(s)}\\ s\overline{f_2(s)}
\end{pmatrix}.}
\label{b1}
\end{equation}
\begin{example}\label{exa.a1}
Suppose $\abs{\psi(s)}=1$ for $\nu$-a.e. $s>0$. Then the second component of $f$ disappears, and after a trivial change of notation our model reduces to the scalar one: 
\[
(\calB f)(s)=s\psi(s)\overline{f(s)}, \quad f\in L^2(\nu).
\]
This expression shows the analogy between $\calB$ and the multiplication operator $M_s$ (the operator of multiplication by the independent variable). 
\end{example}

We return to the general case.
Denote $\1:=\begin{pmatrix}1\\0\end{pmatrix}\in\mathcal M$. 
Let us discuss the spectral properties of $\calB$. 

\begin{proposition}\label{prp.3}
The operator $\mathcal B$ satisfies:
\begin{enumerate}[\rm (i)]
\item
$\calB$ is symmetric, i.e. 
$\jap{\calB f,g}=\jap{\calB g,f}$ for all $f,g\in\mathcal M$;
\item
$\abs{\calB}=M_s$, the operator of multiplication by the independent variable $s$ in $\mathcal M$;
\item
$\calB$ satisfies the identities
\begin{align*}
\jap{f(\abs{\calB})\1,\1}&=\int_0^\infty f(s)\dd\nu(s), \quad\quad\quad \forall f\in C(\bbR),
\\
\jap{f(\abs{\calB})\calB\1,\1}&=\int_0^\infty sf(s)\psi(s)\dd\nu(s), \quad \forall f\in C(\bbR);
\end{align*}
\item
$\1$ is a cyclic element for $\calB$. 
\end{enumerate}
\end{proposition}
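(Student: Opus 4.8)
The plan is to verify the four properties of $\calB$ by direct computation, exploiting the explicit $2\times 2$ matrix in \eqref{b1}. Writing $M(s)$ for that matrix, I first record its two key algebraic features: it is complex-symmetric (equal to its own transpose), and it is \emph{unitary} for every $s$ with $\abs{\psi(s)}\leq 1$, since a short calculation gives $M(s)\overline{M(s)} = (\abs{\psi(s)}^2 + (1-\abs{\psi(s)}^2))\,I = I$. These two facts will drive the whole proof.

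\textbf{Parts (i) and (ii).} For symmetry, I would write out $\jap{\calB f, g} = \int_0^\infty \langle M(s)\,s\overline{f(s)},\, g(s)\rangle_{\bbC^2}\,\dd\nu(s)$, where the inner product is the standard one on $\bbC^2$. Using that $M(s) = M(s)^T$, one moves $M(s)$ onto the other factor and checks that the expression is symmetric in $f$ and $g$; the conjugations introduced by anti-linearity cancel correctly precisely because $M$ is symmetric. For (ii), I would compute $\calB^2$. Since $\calB f = M(s)\,s\overline{f}$, applying $\calB$ again yields $\calB^2 f = M(s)\,s\,\overline{M(s)\,s\overline{f}} = s^2\,M(s)\overline{M(s)}\,f = s^2 f$, using unitarity of $M(s)$. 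Thus $\calB^2 = M_{s^2} = M_s^2$, and since $M_s$ is the nonnegative self-adjoint square root of $M_s^2$, we get $\abs{\calB} = \sqrt{\calB^2} = M_s$.

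\textbf{Parts (iii) and (iv).} Given (ii), functional calculus for $\abs{\calB} = M_s$ is just multiplication: $f(\abs{\calB})$ acts as multiplication by $f(s)$. The first identity in (iii) is then immediate, since $\1 = \binom{1}{0}$ has both components concentrated appropriately and $\jap{f(M_s)\1,\1} = \int_0^\infty f(s)\,\dd\nu(s)$. For the second identity I compute $\calB\1 = M(s)\binom{s}{0} = \binom{s\psi(s)}{s\sqrt{1-\abs{\psi(s)}^2}}$, then apply $f(\abs{\calB}) = f(M_s)$ and pair against $\1$; only the first component survives the inner product with $\binom{1}{0}$, giving $\int_0^\infty f(s)\,s\psi(s)\,\dd\nu(s)$ as claimed. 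For cyclicity (iv), I would show that $\clos\{g(\calB)\1 : g\in\calP\}$ is all of $\calM$. Using $\calB^2 = M_s^2$, the even polynomials generate $\clos\{h(s^2)\1\} = \clos\{h(M_s)\1\}$, and since $\1$ is a cyclic vector of maximal type for $M_s$ acting on the first component, these fill out the first-component functions; the odd polynomials contribute $\clos\{h(M_s)\calB\1\}$, whose second component is $s\sqrt{1-\abs{\psi(s)}^2}\,h(s)$, and these span all second-component functions supported on $S_2 = \{\abs{\psi}<1\}$, which is exactly the constraint defining $\calM$.

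\textbf{Main obstacle.} The routine parts are (i)–(iii); the one requiring genuine care is cyclicity (iv), specifically showing that the odd-polynomial span fills the \emph{entire} second component allowed by the definition of $\calM$. One must argue that functions of the form $s\sqrt{1-\abs{\psi(s)}^2}\,h(s)$ are dense in $L^2(\nu|_{S_2})$ in the second slot, which requires that the weight $s\sqrt{1-\abs{\psi(s)}^2}$ be nonzero $\nu$-a.e.\ on $S_2$ (true by the definition of $S_2$, modulo the point $s=0$, which by the convention $\psi(0)=1$ lies in $S_1$ and carries no second component). A clean way to handle this is to observe that multiplication by this strictly positive (a.e.\ on $S_2$) bounded measurable weight is an injective operator with dense range between the relevant $L^2$ spaces, so density of polynomials in $L^2(\nu)$ transfers to density of the weighted span; the interplay between the two components and the subspace constraint is where the argument must be assembled carefully rather than quoted.
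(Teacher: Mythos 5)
Your proposal is correct and follows essentially the same route as the paper: (i) from the symmetry of the matrix in \eqref{b1}, (ii) from the identity $M(s)\overline{M(s)}=I$ giving $\calB^2=M_s^2$, (iii) from (ii) plus the explicit form of $\calB\1$, and (iv) by computing $f(\calB^2)\1$ and $g(\calB^2)\calB\1$ and arguing density in $\calM$. The only difference is that you spell out the density argument for (iv) (subtracting off first components and using that the weight $s\sqrt{1-\abs{\psi(s)}^2}$ is $\nu$-a.e.\ nonzero on $S_2$), which the paper simply declares to be clear.
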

\begin{proof}
Claim (i) follows from the symmetry of the matrix  in \eqref{b1}. 

Claim (ii) follows from the calculation
\[
\begin{pmatrix}
\psi(s)&\sqrt{1-\abs{\psi(s)}^2}
\\
\sqrt{1-\abs{\psi(s)}^2}&-\overline{\psi(s)}
\end{pmatrix}
\begin{pmatrix}
\overline{\psi(s)}&\sqrt{1-\abs{\psi(s)}^2}
\\
\sqrt{1-\abs{\psi(s)}^2}&-\psi(s)
\end{pmatrix}
=
\begin{pmatrix}
1&0\\ 0&1
\end{pmatrix}.
\]

Claim (iii) follows from (ii) and the explicit form of the matrix in \eqref{b1}.

Claim (iv). Let $f$, $g$ be polynomials; then 
\[
f(\calB^2)\1=f(M_s^2)\1=\begin{pmatrix}f(s^2)\\0\end{pmatrix}
\]
and similarly
\[
(g(\calB^2)\calB\1)(s)=\begin{pmatrix}sg(s^2)\psi(s)\\sg(s^2)\sqrt{1-\abs{\psi(s)}^2}\end{pmatrix}.
\]
It is clear that when $f$, $g$ vary over the set of all polynomials, the linear combinations of the above elements form a dense subset of $\calM$, i.e. we obtain \eqref{a6a}. 
\end{proof}

Thus, we see that the properties of $\calB$ mirror those of $B$, with $\delta=\1$. 
In particular, the spectral data of $\calB$ is $(\nu,\psi)$. 

\begin{remark*}
One can replace the matrix
in \eqref{b1} by
\[
\begin{pmatrix}
\psi(s)&\sqrt{1-\abs{\psi(s)}^2}\ee^{\ii\alpha(s)}
\\
\sqrt{1-\abs{\psi(s)}^2}\ee^{\ii\alpha(s)}&-\overline{\psi}(s)\ee^{2\ii\alpha(s)}
\end{pmatrix}
\]
with any real-valued function $\alpha(s)$. Then the resulting operator $\calB$ will be unitarily equivalent to $\calB_{\alpha=0}$ through multiplication by 
\[
\begin{pmatrix}
1&0\\0&\ee^{\ii\alpha(s)}
\end{pmatrix}.
\]
\end{remark*}

\subsection{$B$ is unitarily equivalent to $\calB$}\label{sec.a6a}
Our first main result is 
\begin{theorem}\label{thm1}
Let $B$ be a bounded anti-linear symmetric operator on a Hilbert space $\calH$ with a cyclic vector $\delta$ and the spectral data $(\nu,\psi)$. Further, let $\calB$ be the model operator \eqref{b1} in $\calM$, constructed from this spectral data. Then there exists a unitary map $U:\calH\to\calM$ such that $U\delta=\1$ and $UB=\calB U$. 
\end{theorem}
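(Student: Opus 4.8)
The plan is to define $U$ explicitly on the dense subspace supplied by cyclicity, check that a single inner-product identity makes it a well-defined isometry, and then deduce unitarity and the intertwining by soft arguments. First I would introduce the linear subspace
\[
\calD := \Span\{ f(B^2)\delta + g(B^2)B\delta : f,g\in\calP \},
\]
which is dense in $\calH$ by the cyclicity condition \eqref{a6a}, and define $U$ on $\calD$ by
\[
U\big( f(B^2)\delta + g(B^2)B\delta \big) := f(\calB^2)\1 + g(\calB^2)\calB\1 .
\]
By Proposition~\ref{prp.3}(iv) the elements on the right-hand side have dense linear span in $\calM$, so the range of $U$ will be dense; moreover the image genuinely lands in $\calM$ and not merely in $L^2(\nu;\bbC^2)$, because on $S_1$ the factor $\sqrt{1-\abs{\psi}^2}$ vanishes, killing the second component of $g(\calB^2)\calB\1$ there.

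The heart of the proof is the identity
\begin{align*}
& \jap{ f(B^2)\delta + g(B^2)B\delta,\, \tilde f(B^2)\delta + \tilde g(B^2)B\delta }_{\calH} \\
& \qquad = \jap{ f(\calB^2)\1 + g(\calB^2)\calB\1,\, \tilde f(\calB^2)\1 + \tilde g(\calB^2)\calB\1 }_{\calM}
\end{align*}
for all $f,g,\tilde f,\tilde g\in\calP$. I would expand each side into four terms. On the left, using the commutation rule $Bf(B^2)=\overline f(B^2)B$ (where $\overline f$ denotes the polynomial with complex-conjugated coefficients), the symmetry \eqref{a1}, and the defining relations \eqref{a2}--\eqref{a3} of the spectral data, every term collapses to an integral against $\dd\nu$: the $\delta$--$\delta$ term gives $\int_0^\infty f(s^2)\overline{\tilde f(s^2)}\dd\nu(s)$, the $B\delta$--$B\delta$ term gives $\int_0^\infty s^2 g(s^2)\overline{\tilde g(s^2)}\dd\nu(s)$, and the two cross terms carry a factor $\psi$, respectively $\overline\psi$. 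On the right I would use the explicit matrix \eqref{b1} together with $\calB^2=M_s^2$; here the cancellation $\abs{\psi(s)}^2+(1-\abs{\psi(s)}^2)=1$ in the $B\delta$--$B\delta$ term reproduces exactly the factor $s^2$, while the cross terms again produce $\psi$ and $\overline\psi$. Matching the four pairs of integrals establishes the identity.

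This identity does essentially all the work. Taking $\tilde f=f$ and $\tilde g=g$ shows $\norm{Ux}_{\calM}=\norm{x}_{\calH}$ for $x\in\calD$; in particular $U$ is well defined (if $x=0$ then $\norm{Ux}=0$) and isometric, so it extends to an isometry $U:\calH\to\calM$. Its range is then closed and dense, hence $U$ is onto, i.e.\ unitary. Choosing $f=1$, $g=0$ gives $U\delta=\1$. For the intertwining I would check $UB=\calB U$ on $\calD$: from $Bf(B^2)=\overline f(B^2)B$ one computes
\[
B\big( f(B^2)\delta + g(B^2)B\delta \big) = \overline f(B^2)B\delta + \overline g(B^2)B^2\delta,
\]
and the identical algebraic manipulation is valid in the model (it uses only that $\calB$ is symmetric anti-linear with $\calB^2$ commuting with $\calB$, which is Proposition~\ref{prp.3}). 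Hence $UB$ and $\calB U$ agree on $\calD$, and therefore on all of $\calH$ by boundedness of $B$, $\calB$ and $U$.

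I expect the main obstacle to be the bookkeeping in the four-term inner-product computation: one must keep precise track of which polynomials acquire conjugated coefficients, both because of the anti-linearity of $B$ and because of the conjugate-linearity of $\jap{\cdot,\cdot}$ in its second slot, and one must invoke the extension of \eqref{a3} to the Borel functions $s\mapsto s^k$ noted in the first remark after Proposition~\ref{prp.a3}. Once that identity is in place, everything else is routine.
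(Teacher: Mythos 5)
Your proposal is correct and follows essentially the same route as the paper: define $U$ on the dense set generated by polynomials in $B$ applied to $\delta$, verify that the spectral-data identities \eqref{a2}--\eqref{a3} (together with Proposition~\ref{prp.3}(iii)) force $U$ to be isometric, and then deduce well-definedness, surjectivity via Proposition~\ref{prp.3}(iv), and the intertwining. The only cosmetic differences are that the paper parametrises the dense set by a single polynomial $f$ (decomposed as $f(s)=u(s^2)+sv(s^2)$) and checks only the norm identity rather than the full polarised inner-product identity.
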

\begin{proof}
We set 
\begin{equation}
Ux=f(\calB)\1,\quad \text{ if }\quad x=f(B)\delta, \quad f\in\calP.
\label{c3}
\end{equation}
By the cyclicity assumption \eqref{a6}, this defines $Ux$ on a dense set in $\calH$. 
It will follow from Lemma~\ref{lma.d1} below that the choice of $f$ in the representation $x=f(B)\delta$ is unique. However, in order to avoid a circular argument, we will not rely on this fact here; below we check that the definition \eqref{c3} of $U x$ is independent of the choice of $f$. First let us compute the norm of $x$ in $\calH$ and the norm of  $U x$  in $\calM$. 
Write 
\[
f(s)=u(s^2)+sv(s^2),
\]
with some polynomials $u$ and $v$. Then 
\[
x=u(B^2)\delta+v(B^2)B\delta, \quad
Ux=u(\calB^2)\1+v(\calB^2)\calB\1.
\]
Using the symmetry of $B$ and the definition \eqref{a2}, \eqref{a3} of the spectral data, we find
\begin{align*}
\norm{x}^2&=
\norm{u(B^2)\delta}^2+\norm{v(B^2)B\delta}^2
+2\Re\jap{v(B^2)B\delta,u(B^2)\delta}
\\
&=\int_0^\infty\abs{u(s^2)}^2\dd\nu(s)+\int_0^\infty \abs{v(s^2)}^2s^2\dd\nu(s)
+2\Re\int_0^\infty \overline{u}(s^2)v(s^2)s\psi(s)\dd\nu(s).
\end{align*}
In a similar way, using Proposition~\ref{prp.3}(iii), we find
\begin{align*}
\norm{U x}^2&=
\norm{u(\calB^2)\1}^2+\norm{v(\calB^2)\calB\1}^2
+2\Re\jap{v(\calB^2)\calB\1,u(\calB^2)\1}
\\
&=\int_0^\infty\abs{u(s^2)}^2\dd\nu(s)+\int_0^\infty \abs{v(s^2)}^2s^2\dd\nu(s)
+2\Re\int_0^\infty \overline{u}(s^2)v(s^2)s\psi(s)\dd\nu(s)
\end{align*}
and so finally
\[
\norm{U x}=\norm{x}.
\]
In particular, if $x=0$, then $U x=0$. This shows that the definition \eqref{c3} is independent of the choice of $f$ in the representation $x=f(B)\delta$ and that $U$ is an isometry. 
By Proposition~\ref{prp.3}(iv), the map $U$ is onto $\calM$. Taking $f=1$, we find that $U \delta=\1$.

If $x=f(B)\delta$, we have
\[
Bx=Bf(B)\delta\quad\text{ and }\quad \calB U x=\calB f(\calB)\1;
\]
now it is clear that $U Bx=\calB U x$, as required. 
\end{proof}

\begin{remark*}
Let $B_1$, $B_2$ be two bounded anti-linear symmetric operators in $\calH_1$ and $\calH_2$ with cyclic elements $\delta_1$ and $\delta_2$ respectively. Suppose that there is a unitary map $\Phi: \calH_1\to\calH_2$ with $\Phi\delta_1=\delta_2$ and $\Phi B_1=B_2\Phi$. Then the spectral data of $B_1$ and $B_2$ coincide. From Theorem~\ref{thm1} we get the converse of this: if $B_1$ and $B_2$ have the same spectral data, then there exists a unitary map $\Phi$ as above. 
\end{remark*}

\subsection{Return to the complex-symmetric viewpoint}\label{sec.a10}
Let us come back to the complex-symmetric viewpoint discussed above in Section~\ref{sec.a3}: let $B=A\calC_{\calH}$, where $\calC_{\calH}$ is a conjugation in $\calH$, and $A$ is $\calC_{\calH}$-symmetric. Analogously to Theorem~\ref{thm1}, one would want to construct a functional model for $A$ (rather than $B$); in fact, this is more or less explicitly asked in the third question of \cite{gar_12}. Unfortunately, it is not clear (at least to the authors of this paper) how to achieve this on the basis of Theorem~\ref{thm1}. In order to explain this point, let us represent our model operator $\calB$ (see \eqref{b1}) as a product $\calB=\calA\calC_{\calM}$ of a \emph{linear} operator 
\[
(\calA f)(s)=
\begin{pmatrix}
\psi(s)&\sqrt{1-\abs{\psi(s)}^2}
\\
\sqrt{1-\abs{\psi(s)}^2}&-\overline{\psi(s)}
\end{pmatrix}
\begin{pmatrix}
s f_1(s)\\ sf_2(s)
\end{pmatrix}
\]
and the conjugation $\calC_{\calM}f=\overline{f}$ in $\calM$. 
Now from Theorem~\ref{thm1} we have
\[
B=A\calC_{\calH}=U^*\calB U
\]
and therefore
\begin{equation}
A=U^*\calA\calC_{\calM} U\calC_{\calH}=U^*\bigl(\calA \calC_{\calM} U\calC_{\calH} U^*\bigr) U,
\label{d3a}
\end{equation}
and so $A$ is unitarily equivalent to the \emph{linear} operator 
$\calA \calC_{\calM} U\calC_{\calH} U^*$ in $\calM$. However, the structure of the product of conjugations $\calC_{\calM} U\calC_{\calH} U^*$ is not explicit. 

In connection with \eqref{d3a}, we would like to recall the statement known as the Autonne--Takagi factorisation (see \cite[Section~3.0]{horn-johnson}): 
\emph{If $A$ is an $n\times n$ symmetric matrix, then $A=U\Sigma U^T$, where $U$ is unitary and $\Sigma=\diag(s_1,s_2,\dots,s_{n})$ is the diagonal matrix with the singular values of $A$ listed along the main diagonal.} 

We observe that \eqref{d3a} is somewhat reminiscent of the Autonne--Takagi factorisation. 
Indeed, suppose that $\calH=\bbC^n$ and $B=A\calC_{\bbC^n}$, where $\calC_{\bbC^n}$ is the standard complex conjugation in $\bbC^n$ and the matrix $A$ is symmetric in the usual sense: $A=A^T$. Then necessarily $\nu$ is a finite linear combination of point masses at the singular values $s_1,\dots,s_n$ of $A$. Assume further that $\abs{\psi(s_j)}=1$ for all $j$. 
Then (see Example~\ref{exa.a1}) the space $\calM$ can be naturally identified with $\bbC^n$ and the operator $\calA$ can be viewed as the diagonal operator with $s_j\psi(s_j)$ on the diagonal, where $s_j$ are the singular values of $A$. Finally, since now $\calC_{\calH}=\calC_{\calM}=\calC_{\bbC^n}$, we have $\calC_{\calM} U\calC_{\calH}=(U^*)^T$, and so \eqref{d3a} can be written as
\[
A=U^*\calA(U^*)^T,
\]
which is the Autonne--Takagi factorisation with $U^*$ in place of $U$ and with extra ``phases'' $\psi(s_j)$ on the diagonal of $\Sigma$. 

Despite this similarity, \eqref{d3a} is more specific as it takes into account information coming from the distinguished cyclic element $\delta$, which determines the phase function $\psi$.

\subsection{Further literature on anti-linear operators}
Since anti-linear operators is a rather arcane subject, it may be beneficial to the reader to include references to existing literature on this topic. Here we collect brief remarks based on a search of such literature.

\begin{itemize}
\item 
Basics of the theory of anti-linear operators are reviewed in~\cite{uhl_15}.
\item 
Anti-linear operators can be viewed as a special case of \emph{real linear} operators, see~\cite{huh-ruo_11}. From this point of view, every real linear operator can be represented as a sum of a linear and an anti-linear operator. 
\item
A very relevant reference is the paper  \cite{huh-per_14} by Huhtanen and Per\"{a}m\"{a}ki, but it will be convenient to postpone its discussion to Section~\ref{sec.hp14}.
\item Numerical ranges of anti-linear operators are studied in~\cite{kol-mul_24}.
\item
Bounded Hankel operators on the Hardy space $H^{2}(\mathbb{T})$ can be viewed as symmetric anti-linear mappings $g\mapsto PM_{a}\overline{g}$, where $P:L^{2}(\mathbb{T})\to H^{2}(\mathbb{T})$ is the Riesz projection, and $M_a$ is the operator of multiplication in $L^2(\mathbb{T})$ by a symbol $a\in\BMO(\mathbb{T})$. Our definition of the spectral data $(\nu,\psi)$ is strongly motivated by inverse spectral theory for Hankel operators, developed recently by P.~G\'erard and S.~Grellier; see \cite{gg_17} as well as the follow-up work \cite{gpt_23}. Section~1.8 of \cite{gpt_23} is particularly relevant to the viewpoint of this paper. 
\item
Another natural example of an anti-linear operator is furnished by the \emph{Friedrichs operator}, acting on the Bergman space $B^2(\Omega)$, where $\Omega\subset\bbC$ is a domain in the complex plane. The Friedrichs operator can be viewed as the map $g\mapsto P\overline{g}$, where $P:L^2(\Omega)\to B^2(\Omega)$ is the Bergman projection. 
See \cite{put-sha_00,put-sha_01} for the details. 

\item 
In~\cite{kap_82}, I.~Kaplansky proved that two symmetric bounded anti-linear operators $B_{1}$ and $B_{2}$ are unitarily equivalent if and only $B_{1}^{2}$ and $B_{2}^{2}$ are unitarily equivalent (in fact, it is proven for normal anti-linear operators). Notice that $B_{1}^{2}$ and $B_{2}^{2}$ are linear positive semi-definite operators. 

We recall that two self-adjoint operators are unitarily equivalent if and only if their spectral measures are mutually absolutely continuous and the spectral multiplicity functions coincide almost everywhere with respect to the spectral measure. We may combine Kaplansky's result with Proposition~\ref{prp.a4} to deduce a variant of this claim for symmetric anti-linear operators. As the statement may be of independent interest, we formulate it as a separate proposition.

\begin{proposition}
Let $B_1$ and $B_2$ be symmetric bounded anti-linear operators with cyclic elements and spectral data $(\nu_1,\psi_1)$ and $(\nu_2,\psi_2)$, respectively. Then $B_1$ and $B_2$ are unitarily equivalent if and only if $\nu_1$ and $\nu_2$ are mutually absolutely continuous and 
\[
\{s>0: \abs{\psi_1(s)}=1\}=\{s>0:\abs{\psi_2(s)}=1\}
\]
up to sets of $\nu_{i}$-measure zero, $i=1,2$.
\end{proposition}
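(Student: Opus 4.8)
The plan is to reduce the problem to the classical spectral theorem for self-adjoint operators by passing from each $B_i$ to $|B_i|$, using Kaplansky's theorem and Proposition~\ref{prp.a4} as the two external inputs. First I would invoke Kaplansky's result \cite{kap_82}: $B_1$ and $B_2$ are unitarily equivalent if and only if $B_1^2$ and $B_2^2$ are. Since $B_i^2=|B_i|^2$ is linear, self-adjoint and positive, with $|B_i|=\sqrt{B_i^2}$, I would then note that unitary equivalence is preserved under the Borel functional calculus: a unitary $W$ conjugating $B_1^2$ to $B_2^2$ also conjugates $\sqrt{B_1^2}$ to $\sqrt{B_2^2}$, while conversely squaring recovers $B_i^2$ from $|B_i|$. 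Hence $B_1$ and $B_2$ are unitarily equivalent if and only if the self-adjoint operators $|B_1|$ and $|B_2|$ are.

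Next I apply the spectral theorem for self-adjoint operators recalled just above the statement: $|B_1|$ and $|B_2|$ are unitarily equivalent if and only if their maximal spectral types are mutually absolutely continuous and their multiplicity functions agree almost everywhere. By Proposition~\ref{prp.a4} the vector $\delta_i$ is of maximal type for $|B_i|$, so the spectral measure $\nu_i$ of $|B_i|$ at $\delta_i$ is a measure of maximal spectral type; thus the first condition becomes that $\nu_1$ and $\nu_2$ are mutually absolutely continuous. Proposition~\ref{prp.a4} also gives the multiplicity function of $|B_i|$ explicitly: it equals $1$ on $\{s>0:\abs{\psi_i(s)}=1\}$, equals $2$ on the complementary set $\{s>0:\abs{\psi_i(s)}<1\}$, and equals $1$ at $s=0$ when $0$ is an eigenvalue.

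It then remains to show that agreement of the multiplicity functions is exactly the displayed set condition. Since mutually absolutely continuous measures share the same null sets, $\nu_1$ and $\nu_2$ have the same atom (or lack thereof) at $s=0$, where the multiplicity equals $1$ for both, so the point $s=0$ imposes no constraint. On $(0,\infty)$ the multiplicity takes only the values $1$ and $2$, so the two multiplicity functions agree $\nu_i$-almost everywhere precisely when $\{s>0:\abs{\psi_1(s)}=1\}$ and $\{s>0:\abs{\psi_2(s)}=1\}$ coincide up to $\nu_i$-null sets, which is equivalent to the analogous statement for the complementary sets $\{s>0:\abs{\psi_i(s)}<1\}$. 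This is the claimed condition, and combining it with the mutual absolute continuity of $\nu_1$ and $\nu_2$ completes the argument.

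The main obstacle is not any single hard step but the careful bookkeeping in the last paragraph: one must verify that the convention $\psi(0)=1$ together with mutual absolute continuity makes the endpoint $s=0$ automatically consistent, and one must keep track of the fact that $\psi_i$ is defined only up to $\nu_i$-null sets, which is precisely why the set identity is asserted modulo null sets. Everything else is a direct transcription of Kaplansky's theorem, the functional-calculus invariance of unitary equivalence, and the multiplicity data supplied by Proposition~\ref{prp.a4}.
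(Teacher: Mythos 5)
Your proposal is correct and follows exactly the route the paper indicates: it combines Kaplansky's theorem with Proposition~\ref{prp.a4} and the classical unitary-invariant classification of self-adjoint operators (measure class of the maximal spectral type plus the multiplicity function). The paper only sketches this deduction in one sentence, and your write-up supplies the same reduction $B_i \mapsto B_i^2 \mapsto \abs{B_i}$ with the correct bookkeeping at $s=0$ and modulo null sets.
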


\item 
The classical Weyl--von Neumann theorem (generalized by Kuroda~\cite{kur_58}) states that any linear self-adjoint operator is a sum of an operator with pure point spectrum and a compact operator of arbitrarily small $p$-th Schatten class norm with $p>1$.  An analogous statement for symmetric anti-linear operators is proved in~\cite[Thm.~4.5]{ruo_12}. 

\item 
Anti-linear operators appear in problems of mathematical physics~\cite{her-vuj_67}; for example in the Hartree–Bogolyubov theory in nuclear physics~\cite{her-vuj_68} and in the study of planar elasticity~\cite{put-sha_00,put-sha_01}. The so-called relative state operator of bipartite quantum systems is an anti-linear operator which appears when discussing quantum entanglement for example in studies of Einstein--Podolsky--Rosen states~\cite{are-var_00} and of quantum teleportation~\cite{kur-etal_03}. The time reversal operator $\mathcal{T}$ is an anti-linear operator of an essential importance in the so-called $\mathcal{PT}$-symmetric quantum mechanics, see the review~\cite{ben_07} with many references or the book~\cite{bag-etal_15}.
\end{itemize}

\section{Tridiagonalisation}

\subsection{Anti-linear Jacobi operator}\label{sec.a7}
Let $J$ be a Jacobi matrix \eqref{a0}; as before, we assume that the sequences $a_n$, $b_n$ are bounded and $a_n>0$, but we now allow $b_n$ to be complex. Then $J$ is a bounded linear operator on $\ell^2(\bbZ_+)$. We will consider the anti-linear operator $J\calC_{\ell^2}$ on $\ell^2(\bbZ_+)$, where $\calC_{\ell^2}$ is the operator of complex conjugation on $\ell^2(\bbZ_+)$.

Since the matrix of $J$ is symmetric, the operator $J\calC_{\ell^2}$ is symmetric in the sense \eqref{a1}. Furthermore, it is easy to see that $e_0=(1,0,\dots)\in \ell^2(\bbZ_+)$ is a cyclic element for $J\calC_{\ell^2}$. Thus, $J\calC_{\ell^2}$ is an operator of the class considered in Section~\ref{sec.a3}. Our second main result is 

\begin{theorem}\label{thm2}
Let $B$ be a bounded anti-linear symmetric operator on an infinite dimensional Hilbert space $\calH$ with a cyclic element $\delta$. Then there exist bounded sequences $a_n>0$ and $b_n\in\bbC$ and a unitary map $V:\calH\to\ell^2(\bbZ_+)$ such that $V\delta=e_0$ and for the corresponding Jacobi matrix $J$ we have $VB=(J\calC_{\ell^2})V$. 
The sequences $a_n$ and $b_n$ are uniquely defined by these conditions. 
\end{theorem}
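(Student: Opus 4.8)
The plan is to imitate the classical Gram--Schmidt construction behind Proposition~\ref{prp01}, applied to the sequence $\delta,B\delta,B^2\delta,\dots$, while carefully tracking the conjugations forced by the anti-linearity of $B$. First I would record a preliminary fact: the vectors $\{B^k\delta\}_{k=0}^\infty$ are linearly independent. Indeed, if $B^{N+1}\delta$ were to lie in $W_N:=\Span\{\delta,\dots,B^N\delta\}$, then $W_N$ would be a finite-dimensional subspace containing $\delta$ and invariant under $B$ (anti-linearity causes no trouble here, since $B(\sum_k c_kB^k\delta)=\sum_k\overline{c_k}B^{k+1}\delta\in W_N$); by cyclicity \eqref{a6} this would force $\calH=W_N$, contradicting $\dim\calH=\infty$. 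Applying Gram--Schmidt therefore yields an orthonormal system $\{\phi_n\}_{n=0}^\infty$ with $\phi_0=\delta$ and $\phi_n=q_n(B)\delta$ for a polynomial $q_n$ of exact degree $n$, and cyclicity guarantees $\clos\Span\{\phi_n\}=\calH$, so that $\{\phi_n\}$ is an orthonormal basis. This can all be carried out directly in $\calH$, without passing through the functional model of Theorem~\ref{thm1}.

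The heart of the matter is that $B$ is tridiagonal in this basis. Since $B$ is anti-linear one has $Bq_n(B)=q_n^{\ast}(B)B$, so $B\phi_n=\bigl(sq_n^{\ast}(s)\bigr)(B)\delta$ is a polynomial of exact degree $n+1$ in $B$ applied to $\delta$; hence $B\phi_n\in\Span\{\phi_0,\dots,\phi_{n+1}\}$ and its $\phi_{n+1}$-component is nonzero (the leading coefficient of $sq_n^{\ast}$ is the conjugate of that of $q_n$). Expanding $B\phi_n=\sum_{j=0}^{n+1}\jap{B\phi_n,\phi_j}\phi_j$ and invoking symmetry \eqref{a1}, I get $\jap{B\phi_n,\phi_j}=\jap{B\phi_j,\phi_n}=0$ whenever $j<n-1$, because $B\phi_j\in\Span\{\phi_0,\dots,\phi_{j+1}\}$. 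Thus only $j\in\{n-1,n,n+1\}$ survive, and symmetry gives $\jap{B\phi_n,\phi_{n-1}}=\jap{B\phi_{n-1},\phi_n}$. Setting $a_n:=\jap{B\phi_n,\phi_{n+1}}$ and $b_n:=\jap{B\phi_n,\phi_n}$ yields the relation $B\phi_n=a_n\phi_{n+1}+b_n\phi_n+a_{n-1}\phi_{n-1}$, equivalent to the anti-orthogonal recurrence $sq_n^{\ast}(s)=a_nq_{n+1}(s)+b_nq_n(s)+a_{n-1}q_{n-1}(s)$. Crucially, symmetry imposes no reality constraint on $b_n=\jap{B\phi_n,\phi_n}$, which is precisely why complex $b_n$ appear, in contrast to the self-adjoint case.

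It remains to assemble $V:\phi_n\mapsto e_n$ and verify the claims. This $V$ is a unitary $\calH\to\ell^2(\bbZ_+)$ with $V\delta=e_0$, and the decisive anti-linear bookkeeping is the matrix representation: for $x=\sum_m x_m\phi_m$ one has $Bx=\sum_m\overline{x_m}B\phi_m$, so the $n$-th coordinate of $Bx$ is $\sum_m\jap{B\phi_m,\phi_n}\overline{x_m}$; hence $VBV^{\ast}=M\calC_{\ell^2}$, where $M=(\jap{B\phi_m,\phi_n})_{n,m}$ is exactly the Jacobi matrix $J=J(a,b)$ of \eqref{a0}, giving $VB=(J\calC_{\ell^2})V$. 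Boundedness $\abs{a_n},\abs{b_n}\le\norm{B}$ is immediate from Cauchy--Schwarz. To secure $a_n>0$ I would fix phases inductively: with $\phi_0=\delta$ forced and $\phi_0,\dots,\phi_n$ already determined, the next vector $\phi_{n+1}$ is free up to a unimodular factor, and since $\jap{B\phi_n,\phi_{n+1}}\neq0$ this factor can be chosen to make $a_n>0$ without affecting earlier choices. Uniqueness then follows by running the recursion in reverse: $V\delta=e_0$ forces $V^{\ast}e_0=\delta$, and $VB=(J\calC_{\ell^2})V$ with $a_n>0$ forces $b_0=\jap{B\delta,\delta}$, then $a_0=\norm{B\delta-b_0\delta}$ and $V^{\ast}e_1$, and so on. The main obstacle is not a single hard estimate but the disciplined handling of anti-linearity throughout---the conjugated coefficients in $B\phi_n=(sq_n^{\ast})(B)\delta$, the placement of $\calC_{\ell^2}$ in the matrix representation, and the check that the normalisation $a_n>0$ can be arranged consistently.
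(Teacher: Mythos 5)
Your proof is correct, and its core mechanism --- Gram--Schmidt producing $\phi_n=q_n(B)\delta$, the identity $Bq_n(B)=q_n^{\ast}(B)B$, and the symmetry \eqref{a1} killing all matrix entries below the subdiagonal --- is exactly the paper's. You differ in two places, both legitimately. First, where the paper must show the form $[p,q]=\jap{p(B)\delta,q(B)\delta}$ is non-degenerate, it proves Lemma~\ref{lma.d1} by a case analysis on the sets $S_1,S_2$ from the spectral data, using that the $2\times2$ matrix in \eqref{d2} has rank one or two and a polynomial identity argument (the auxiliary polynomial $w=p^{\ee}(p^{\ee})^*-p^{\oo}(p^{\oo})^*$ evaluated on the imaginary axis). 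You instead prove linear independence of $\{B^k\delta\}$ directly: a linear dependence would make $W_N=\Span\{\delta,\dots,B^N\delta\}$ a finite-dimensional $B$-invariant subspace containing $\delta$, contradicting cyclicity and $\dim\calH=\infty$. This is shorter and more elementary for the theorem as stated; the paper's formulation in terms of $(\nu,\psi)$ alone is what it needs elsewhere, namely to run the argument starting from abstract spectral data (the surjectivity part of Theorem~\ref{thm.a8}) and to justify the uniqueness remark in Section~\ref{sec.a3}. Second, for uniqueness you run the recursion forward from $V^*e_0=\delta$, extracting $b_0=\jap{B\delta,\delta}$, $a_0=\norm{B\delta-b_0\delta}$, $V^*e_1$, and so on; the paper instead shows $q_m(J\calC_{\ell^2})e_0=e_m$, deduces that the $q_n$ are the Gram--Schmidt polynomials up to unimodular factors fixed by $a_n>0$, and reads off $a_n,b_n$ from \eqref{d10}--\eqref{d11}. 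Both are complete; your phase-fixing of $\phi_{n+1}$ to make $a_n>0$ is also handled correctly (it only perturbs quantities not yet fixed).
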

Under the hypothesis of this theorem, it is clear that the spectral data of $B$ and $J\calC_{\ell^2}$ coincide.

\subsection{Anti-orthogonal polynomials}\label{sec.a8}
Similarly to the linear self-adjoint case, the determination of the Jacobi parameters $a_n>0$ and $b_n\in\bbC$ from the spectral data can be linked to orthogonal polynomials, but in a less obvious way. 
For any polynomial $p$, let us denote its even and odd parts by 
\[
p^{\ee}(s)=\frac12(p(s)+p(-s)), \quad
p^{\oo}(s)=\frac12(p(s)-p(-s)).
\]
Given the spectral data $(\nu,\psi)$, let us define the following sesquilinear form on polynomials:
\begin{equation}
[p,q]:=
\int_0^\infty 
\Jap{
\begin{pmatrix}1&\psi(s)\\ \overline{\psi(s)}&1\end{pmatrix}
\begin{pmatrix}p^{\ee}(s)\\ p^{\oo}(s)\end{pmatrix},
\begin{pmatrix}q^{\ee}(s)\\ q^{\oo}(s)\end{pmatrix}
}\dd\nu(s),
\label{d2}
\end{equation}
where $\jap{\cdot,\cdot}$ denotes the Euclidean inner product on $\bbC^{2}$. When expanded, we have
\begin{align*}
[p,q]
=&
\int_0^\infty p^{\ee}(s)\overline{q^{\ee}(s)}\dd\nu(s)
+
\int_0^\infty p^{\oo}(s)\overline{q^{\ee}(s)}\psi(s)\dd\nu(s)
\\
&+
\int_0^\infty p^{\ee}(s)\overline{q^{\oo}(s)}\overline{\psi(s)}\dd\nu(s)
+
\int_0^\infty p^{\oo}(s)\overline{q^{\oo}(s)}\dd\nu(s).
\end{align*}
If $(\nu,\psi)$ is the spectral data of a bounded anti-linear symmetric operator $B$ with a~cyclic unit vector $\delta$, then
\begin{equation}
[p,q]=\jap{p(B)\delta,q(B)\delta},
\label{eq:sesq_by_B}
\end{equation}
as it follows from formulas \eqref{a2} and \eqref{a3}.

It is a non-trivial step to check that this sesquilinear form is non-degenerate, i.e. if $[p,p]=0$, then $p=0$; we prove this statement in Lemma~\ref{lma.d1}.
Now let us apply the Gram--Schmidt process to the sequence $1,s,s^2,\dots$ with respect to this form. 
Let $\{q_n\}_{n=0}^\infty$ be the resulting sequence of normalised polynomials with $\deg q_n=n$. 
The polynomials $q_n$ are uniquely defined by the spectral data $(\nu,\psi)$, up to unimodular multiplicative factors. 
In Section~\ref{sec.c} we prove that this sequence of polynomials satisfies the three-term recurrence relation
\begin{equation}
sq_{n}^{\ast}(s)=a_nq_{n+1}(s)+b_nq_n(s)+a_{n-1}q_{n-1}(s), \quad n\geq0,
\label{d2a}
\end{equation}
with the usual convention $a_{-1}=q_{-1}(s)=0$, where $q_{n}^{\ast}$ denotes the polynomial $q_{n}$ with complex conjugated coefficients. Conditions $a_n>0$ and $q_{0}(s)=1$ fix the choice of multiplicative factors in the definition of $q_n$. Thus, similarly to the classical (linear self-adjoint) case, we can read off the Jacobi parameters from \eqref{d2a}. We will call the sequence of polynomials $q_n$, satisfying the recurrence relation \eqref{d2a}, \emph{anti-orthogonal polynomials}.

\subsection{Connection with the results of \cite{pus-sta_24}}
In our previous publication \cite{pus-sta_24}, we considered an inverse spectral problem for bounded non-self-adjoint Jacobi matrices $J$ of the same class as discussed above, i.e.  $a_n>0$ and $b_n\in\bbC$. We now explain the connection of Theorem~\ref{thm2} with the results of \cite{pus-sta_24}.

We first note that the spectral data $(\nu,\psi)$ was defined in \cite{pus-sta_24} from the \emph{linear}, rather than \emph{anti-linear} viewpoint, i.e. by the relations
\begin{align*}
\jap{f(\abs{J})e_0,e_0}&=\int_0^\infty f(s)\dd\nu(s),
\\
\jap{Jf(\abs{J})e_0,e_0}&=\int_0^\infty sf(s)\psi(s)\dd\nu(s),
\end{align*}
for all $f\in C(\bbR)$. However, this definition is equivalent to \eqref{a2}, \eqref{a3} with $B=J\calC_{\ell^2}$, because of the easily verifiable identities $\abs{B}=\abs{J^*}$ and 
\begin{align*}
\jap{f(\abs{B})e_0,e_0}&=\jap{f(\abs{J^*})e_0,e_0}=\jap{f(\abs{J})e_0,e_0},
\\
\jap{f(\abs{B})Be_0,e_0}&=\jap{f(\abs{J^*})Je_0,e_0}=\jap{Jf(\abs{J})e_0,e_0}.
\end{align*}
Next, we recall the main result of \cite{pus-sta_24}:
\begin{theorem}[Pushnitski--{\v S}tampach~\cite{pus-sta_24}]\label{thm.a8}
$\, $ 
\begin{enumerate}[\rm (i)]
\item
Uniqueness: Any bounded Jacobi matrix $J$ satisfying $a_n>0$ and $b_n\in\bbC$ is uniquely determined by its spectral data $(\nu,\psi)$. 
\item
Surjectivity:
Let $\nu$ be a probability measure with a bounded infinite support in $[0,\infty)$. Let $\psi\in L^\infty(s\dd\nu(s))$ be a function such that $\abs{\psi(s)}\leq1$ for $\nu$-a.e. $s>0$. 
Then $(\nu,\psi)$ is the spectral data for a bounded Jacobi matrix $J$ satisfying  $a_n>0$ and $b_n\in\bbC$.
\end{enumerate}
\end{theorem}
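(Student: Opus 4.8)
The plan is to translate everything into the anti-linear framework of the preceding sections and to deduce both assertions from the theory of the sesquilinear form $[\cdot,\cdot]$ in \eqref{d2} and its anti-orthogonal polynomials. Given a Jacobi matrix $J$ as in \eqref{a0} with $a_n>0$, $b_n\in\bbC$, set $B=J\calC_{\ell^2}$; as recorded above, $B$ is a bounded symmetric anti-linear operator with cyclic vector $e_0$, $\abs{B}=\abs{J^*}$, and its spectral data defined by \eqref{a2}, \eqref{a3} coincides with the data defined from the linear viewpoint. By \eqref{eq:sesq_by_B}, the pair $(\nu,\psi)$ determines the form $[\cdot,\cdot]$ on $\calP$ completely, so both parts become statements about this form and the Gram--Schmidt process applied to $1,s,s^2,\dots$.

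For the uniqueness assertion (i), I would first record the algebraic identity $Bq(B)=(sq^{\ast})(B)$, valid for every $q\in\calP$ precisely because $B$ is anti-linear, where $q^{\ast}$ denotes $q$ with complex-conjugated coefficients. Let $\{q_n\}$ be the anti-orthogonal polynomials, i.e. the normalised Gram--Schmidt output for $[\cdot,\cdot]$, with $q_0=1$ and $a_n>0$ in \eqref{d2a}. I would then check, by induction on $n$ using this identity together with the recurrence \eqref{d2a}, that $q_n(B)e_0=e_n$ for all $n$: the base case is $q_0(B)e_0=e_0$, while the inductive step isolates $e_{n+1}$ from $Be_n=a_ne_{n+1}+b_ne_n+a_{n-1}e_{n-1}$. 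Since $\{e_n\}$ is orthonormal, this shows that the $q_n$ are exactly the orthonormal polynomials of the form and that their recurrence coefficients are the Jacobi parameters $(a_n,b_n)$ of $J$. As $(\nu,\psi)$ determines the form, hence the $q_n$ up to the unimodular factors fixed by $a_n>0$ and $q_0=1$, the parameters $(a_n,b_n)$ are uniquely recovered; alternatively, (i) also follows from the uniqueness clause of Theorem~\ref{thm2}.

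For the surjectivity assertion (ii), I would construct the operator directly from the given data rather than building the matrix by hand. Starting from $(\nu,\psi)$ with $\nu$ of bounded infinite support and $\abs{\psi}\le1$, form the functional model $\calB$ on $\calM$ as in \eqref{b1}; by Proposition~\ref{prp.3} it is a bounded symmetric anti-linear operator with cyclic vector $\1$ whose spectral data is $(\nu,\psi)$, and $\calM$ is infinite-dimensional because $\supp\nu$ is infinite. Applying Theorem~\ref{thm2} to $\calB$ produces bounded sequences $a_n>0$, $b_n\in\bbC$, a Jacobi matrix $J$, and a unitary $V:\calM\to\ell^2(\bbZ_+)$ with $V\1=e_0$ and $V\calB=(J\calC_{\ell^2})V$. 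Since a unitary carrying $\1$ to $e_0$ preserves the defining relations \eqref{a2}, \eqref{a3}, the spectral data of $J\calC_{\ell^2}$—equivalently of $J$—is again $(\nu,\psi)$, and boundedness of $a_n,b_n$ follows from $\norm{J}=\norm{\calB}=\norm{M_s}\le\sup\supp\nu$. This yields (ii).

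The single hard kernel shared by both parts is the non-degeneracy of the form $[\cdot,\cdot]$, i.e. Lemma~\ref{lma.d1}: without it the Gram--Schmidt step may fail to produce a polynomial of each exact degree, and the recurrence \eqref{d2a} need not close up with $a_n>0$. The difficulty is localised on the set $S_1=\{s:\abs{\psi(s)}=1\}$, where the weight matrix $\begin{pmatrix}1&\psi\\ \overline{\psi}&1\end{pmatrix}$ degenerates to rank one and the even and odd parts $p^{\ee},p^{\oo}$ become linearly tied; ruling out a nonzero null polynomial there requires combining this rank-one constraint on $S_1$ with the genuine two-dimensional positivity on $S_2$ and, crucially, the infinitude of $\supp\nu$. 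I expect this non-degeneracy—rather than the essentially bookkeeping steps of the recurrence and the boundedness estimate—to be the main obstacle.
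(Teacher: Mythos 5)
Your proposal is correct and follows essentially the same route as the paper: the paper deduces part (i) directly from the uniqueness clause of Theorem~\ref{thm2} (whose proof is precisely your induction $q_n(J\calC_{\ell^2})e_0=e_n$ together with the identification of the recurrence coefficients via \eqref{d10}--\eqref{d11}), and part (ii) by applying Theorem~\ref{thm2} to the functional model $\calB$ built from $(\nu,\psi)$, using Proposition~\ref{prp.3} and the infinite support of $\nu$ exactly as you do. The only caveat is a slight looseness in your inductive step for (i), where the recurrence coefficients of the Gram--Schmidt polynomials and the Jacobi parameters of $J$ are used interchangeably before they have been identified (the paper avoids this by defining $q_n$ from the recurrence with $J$'s parameters and then proving orthonormality), but your explicit fallback to the uniqueness clause of Theorem~\ref{thm2} renders this immaterial.
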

The term ``surjectivity'' in (ii) refers to the surjectivity of the spectral map $J\mapsto (\nu,\psi)$. Similarly, (i) means that the spectral map is injective. In fact, Theorem~\ref{thm.a8} follows from Theorem~\ref{thm2}; let us explain this.

The last statement of Theorem~\ref{thm2} implies that the Jacobi parameters $a_n$, $b_n$ are uniquely determined by the spectral data $(\nu,\psi)$. This yields  the uniqueness statement of Theorem~\ref{thm.a8}. 

Next, let $(\nu,\psi)$ be as in the hypothesis of Theorem~\ref{thm.a8}(ii). 
Consider the corresponding ``functional model'' operator $\calB$ on $\calM$. The assumption that $\nu$ has infinite support implies that the space $\calM$ is infinite dimensional. 
Now taking $B=\calB$ in Theorem~\ref{thm2}, we see that there exists a Jacobi matrix with the spectral data $(\nu,\psi)$. This yields the statement of Theorem~\ref{thm.a8}(ii). 

To summarise: the construction of this paper gives a shorter (and perhaps more conceptual) proof of the main result of \cite{pus-sta_24}.

Furthermore, let $J$ be a Jacobi matrix of the above class, with the spectral data $(\nu,\psi)$. Then, taking $B=J\calC_{\ell^2}$ in Theorem~\ref{thm1}, we see that $J\calC_{\ell^2}$ is unitarily equivalent to the functional model $\calB$. 
Unfortunately, it is not clear from this whether it is possible to construct a functional model for the \emph{linear} non-self-adjoint Jacobi matrix $J$, see the discussion in Section~\ref{sec.a10}.

\subsection{Connection with the results of \cite{huh-per_14}}
\label{sec.hp14}
Some of our construction is very close to the one by Huhtanen and Per\"{a}m\"{a}ki \cite{huh-per_14}, which we briefly review here. In \cite{huh-per_14}, the authors use the concept of a \emph{biradial measure}; roughly speaking, a~probability measure $\rho$ in the complex plane is biradial if any circle centered at the origin intersects the support of $\rho$ at no more than two points. (For example, a measure supported on any straight line in $\bbC$ is biradial.) Given a compactly supported biradial measure $\rho$, the authors apply the  Gram--Schmidt process to the sequence of ``monomials'' $1$, $z$, $|z|^{2}$, $|z|^{2}z$, $|z|^{4}$, $|z|^{4}z$, $\dots$. The resulting sequence of ``polynomials'' satisfy the same three-term recurrence as \eqref{d2a} with some $a_n>0$ and $b_n\in\bbC$ and so they define a Jacobi matrix $J=J(a,b)$ of the same class as in our Theorem~\ref{thm2}. 
Then the authors prove an anti-linear version of Favard's theorem~\cite[Thm.~3.15]{huh-per_14}: \emph{Given bounded Jacobi parameters $a_{n}>0$ and $b_{n}\in\bbC$, there exists a biradial measure $\rho$ such that the Jacobi matrix constructed as above coincides with $J(a,b)$.} However, the measure $\rho$ is not unique, i.e. there are many biradial measures giving rise to the same Jacobi matrix.

Furthermore, the authors prove the following spectral theorem for anti-linear symmetric bounded operators $B$ with a cyclic vector~\cite[Thm.~3.16]{huh-per_14}: \emph{For any such $B$ there exists a~compactly supported symmetric biradial measure $\rho$ and a~unitary map $U:\calH\to L^{2}(\rho)$ such that} 
\[
 UBU^{-1}f(z) = z\overline{f(z)}.
\]
Again, $\rho$ is not unique. 

To summarise: we are able go further than \cite{huh-per_14} and establish uniqueness in both Theorems~\ref{thm1} and \ref{thm2} due to a judicious choice of the spectral data: the class of pairs $(\nu,\psi)$ is tighter than the class of biradial measures $\rho$. Furthermore, the connection of $(\nu,\psi)$ with the spectral theory of $B$ (see e.g. Proposition~\ref{prp.a4}) seems to be more transparent than in the case of biradial measures.

\section{Proof of Theorem~\ref{thm2}}
\label{sec.c}

\subsection{A sesquilinear form on polynomials}
In what follows, $B$ is a bounded anti-linear symmetric operator on an infinite dimensional Hilbert space $\calH$ with a cyclic element $\delta$, and $(\nu,\psi)$ is the corresponding spectral data. We define the sesquilinear form~\eqref{d2}. We would like to apply Gram--Schmidt to the sequence $1,s,s^2,\dots$ with respect to this sesquilinear form. In order to do this, we need to ensure that the form is non-degenerate. The argument below follows closely the proof of \cite[Lemma 6.1]{pus-sta_24}.

\begin{lemma}\label{lma.d1}
If $p$ is a polynomial such that $[p,p]=0$, then $p=0$. 
\end{lemma}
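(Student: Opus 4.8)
The plan is to exploit the identity \eqref{eq:sesq_by_B}, or equivalently the explicit integral formula \eqref{d2}, together with the pointwise positivity of the matrix kernel. Writing $M(s):=\begin{pmatrix}1&\psi(s)\\ \overline{\psi(s)}&1\end{pmatrix}$ and $P(s):=\begin{pmatrix}p^{\ee}(s)\\ p^{\oo}(s)\end{pmatrix}$, definition \eqref{d2} reads $[p,p]=\int_0^\infty\jap{M(s)P(s),P(s)}\dd\nu(s)$. Since $\abs{\psi(s)}\leq1$ for $\nu$-a.e.\ $s$, the matrix $M(s)$ is positive semi-definite (its eigenvalues are $1\pm\abs{\psi(s)}$), so the integrand is non-negative. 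Hence $[p,p]=0$ forces $\jap{M(s)P(s),P(s)}=0$ for $\nu$-a.e.\ $s$.

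Next I would read off what this pointwise vanishing says on the two sets $S_1$ and $S_2$. On $S_2$, where $\abs{\psi(s)}<1$, the matrix $M(s)$ is strictly positive definite, so $P(s)=0$, i.e.\ $p^{\ee}(s)=p^{\oo}(s)=0$. On $S_1$, where $\abs{\psi(s)}=1$, expanding $\jap{M(s)P(s),P(s)}=\abs{p^{\ee}(s)}^2+\abs{p^{\oo}(s)}^2+2\Re\bigl(\psi(s)p^{\oo}(s)\overline{p^{\ee}(s)}\bigr)$ and bounding the cross term from below by $-2\abs{p^{\ee}(s)}\abs{p^{\oo}(s)}$ gives $0\geq\bigl(\abs{p^{\ee}(s)}-\abs{p^{\oo}(s)}\bigr)^2$, whence $\abs{p^{\ee}(s)}=\abs{p^{\oo}(s)}$. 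Combining the two cases, I obtain $\abs{p^{\ee}(s)}=\abs{p^{\oo}(s)}$ for $\nu$-a.e.\ $s\geq0$.

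The final step converts this measure-theoretic statement into a polynomial identity. For real $s$ one has $\overline{p^{\ee}(s)}=(p^{\ee})^{\ast}(s)$ and $\overline{p^{\oo}(s)}=(p^{\oo})^{\ast}(s)$, so the polynomial $R:=p^{\ee}(p^{\ee})^{\ast}-p^{\oo}(p^{\oo})^{\ast}$ vanishes on $\supp\nu$. Since $\calH$ is infinite dimensional, $\supp\nu$ is an infinite set, so $R$ has infinitely many zeros and therefore $R\equiv0$. Writing the even and odd parts as $p^{\ee}(s)=u(s^2)$ and $p^{\oo}(s)=sv(s^2)$ and substituting $t=s^2$ turns $R\equiv0$ into the identity $u(t)u^{\ast}(t)=t\,v(t)v^{\ast}(t)$. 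I expect the main obstacle to be precisely this last step: one must argue that a parity mismatch of degrees is incompatible with the identity. Indeed, for $w\not\equiv0$ one has $\deg(ww^{\ast})=2\deg w$ with leading coefficient $\abs{\cdot}^2\neq0$; hence if both $u$ and $v$ were non-zero the left-hand side would have even degree $2\deg u$ and the right-hand side odd degree $1+2\deg v$, a contradiction, while the degenerate cases ($u\equiv0$ forcing $vv^{\ast}\equiv0$, or $v\equiv0$ forcing $uu^{\ast}\equiv0$) also collapse to $u\equiv v\equiv0$. This yields $p^{\ee}\equiv p^{\oo}\equiv0$, i.e.\ $p=0$.
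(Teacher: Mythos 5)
Your proof is correct and follows essentially the same route as the paper's: the same pointwise positivity of the matrix kernel, the same dichotomy between the sets where $\abs{\psi}=1$ and $\abs{\psi}<1$, and the same auxiliary polynomial $p^{\ee}(p^{\ee})^{\ast}-p^{\oo}(p^{\oo})^{\ast}$. The only differences are cosmetic: you finish with a degree-parity argument in the variable $t=s^2$ where the paper evaluates that polynomial on the imaginary axis, and the infinitude of $\supp\nu$, which you assert, does deserve the paper's one-line justification via Proposition~\ref{prp.a4} (each eigenvalue of $\abs{B}$ has multiplicity at most two, so a finitely supported $\nu$ would force $\dim\calH<\infty$).
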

\begin{proof}
We start by observing that $\nu$ has infinite support. Indeed, suppose, to get a contradiction, that $\nu$ is a finite linear combination of point masses. This means that $\abs{B}$ has finitely many eigenvalues. By Proposition~\ref{prp.a4}, each eigenvalue has multiplicity $\leq2$. This means that $\calH$ is finite dimensional, contrary to our assumption. 

For $p\in\mathcal{P}$ suppose $[p,p]=0$. We denote
\begin{equation}
[p,p]=\int_0^\infty F(s)\dd\nu(s), \quad 
F(s):=
\Jap{
\begin{pmatrix}1&\psi(s)\\ \overline{\psi(s)}&1\end{pmatrix}
\begin{pmatrix}p^{\ee}(s)\\ p^{\oo}(s)\end{pmatrix},
\begin{pmatrix}p^{\ee}(s)\\ p^{\oo}(s)\end{pmatrix}
}.
\label{d12}
\end{equation}
Fix a Borel representative of $\psi$ defined for all $s>0$ such that $\abs{\psi(s)}\leq1$ for all $s>0$; if $\nu(\{0\})>0$, we also set $\psi(0)=1$ (see Remark~\ref{rmk.psi}).
With $\psi$ denoting this representative, let $F(s)$ be as in \eqref{d12} and let 
\[
S_0:=\{s\geq0: F(s)=0\}.
\]
By assumption, $S_0$ is the set of full $\nu$-measure and so is infinite.
Let 
\[
S_1:=\{s\geq0: \abs{\psi(s)}=1\},
\quad
S_2:=\{s\geq0: \abs{\psi(s)}<1\}.
\]
The Borel sets $S_1$ and $S_2$ are disjoint with $[0,\infty)=S_1\cup S_2$. 
The $2\times 2$ matrix in \eqref{d12} has rank one on $S_1$ and rank two on $S_2$. 

Let us split $S_0$ into a union of two disjoint Borel sets
\[
S_0=(S_1\cap S_0)\cup (S_2\cap S_0).
\]
At least one of these two sets is infinite. Consider two cases. 

\emph{Case 1: the set $S_2\cap S_0$ is infinite.}
Since the matrix in \eqref{d2}  positive definite on $S_2$, it follows that $p(s)=0$ for all $s\in S_2\cap S_0$. It follows that $p=0$, so in this case the proof is complete. 

\emph{Case 2: the set $S_1\cap S_0$ is infinite.}
We have 
\[
p^{\ee}(s)+\psi(s)p^{\oo}(s)=0, \quad \forall s\in S_1\cap S_0. 
\]
Consider the polynomial $w:=p^{\ee}(p^{\ee})^*-p^{\oo}(p^{\oo})^*$, 
or more explicitly
\[
w(s)=p^{\ee}(s)\overline{p^{\ee}(\bar{s})}-p^{\oo}(s)\overline{p^{\oo}(\bar{s})}.
\]
Using that $\abs{\psi(s)}=1$ on $S_1$, we find that $w(s)=0$ on $S_1\cap S_0$. 
Since $w$ is a~polynomial, it follows that $w$ identically equals to zero. 
Considering $w(\ii t)$ for $t\in\bbR$, we find
\[
0=p^{\ee}(\ii t)\overline{p^{\ee}(-\ii t)}-p^{\oo}(\ii t)\overline{p^{\oo}(-\ii t)}
=p^{\ee}(\ii t)\overline{p^{\ee}(\ii t)}+p^{\oo}(\ii t)\overline{p^{\oo}(\ii t)}
=\abs{p^{\ee}(\ii t)}^2+\abs{p^{\oo}(\ii t)}^2
\]
and therefore $p=0$. 
\end{proof}

\subsection{Anti-orthogonal polynomials}
Since $[\cdot,\cdot]$ is a non-degenerate sesquilinear form on polynomials, we can apply the Gram--Schmidt process to the sequence $1,s,s^2,\dots$ with respect to this form. 
Let $q_n$ be the sequence of normalised polynomials obtained in this way. The unimodular constant in the definition of $q_n$ is not fixed by this description; we will fix the choice of this constant later. 

Let us prove that the polynomials $q_n$ satisfy the three-term recurrence relation \eqref{d2a} with the usual conventions $q_{-1}=0$, $q_0=1$. Recall the notation $q^{\ast}$ is used for a~polynomial $q$ with complex conjugated coefficients.

\begin{lemma}\label{lem:s_sesq}
For any $n,k\in\bbZ_{+}$ we have  
\[
[sq_{n}^{\ast},q_k]=[sq_{k}^{\ast},q_n].
\]
In particular, this expression vanishes if $k<n-1$. 
\end{lemma}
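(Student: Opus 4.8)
The statement to prove is $[sq_n^*, q_k] = [sq_k^*, q_n]$, along with the vanishing claim for $k < n-1$. The plan is to exploit the connection \eqref{eq:sesq_by_B} between the sesquilinear form and the operator $B$, which converts the symmetry of $[\cdot,\cdot]$ under this particular operation into the symmetry property \eqref{a1} of $B$ itself. The key algebraic observation I would establish first is that for any polynomial $p$, the element $(sp^*)(B)\delta$ equals $B\,p(B)\delta$; this is because multiplying by $s$ and conjugating the coefficients corresponds exactly to applying the anti-linear $B$. To see this concretely, if $p(s) = \sum_j c_j s^j$, then $p(B)\delta = \sum_j c_j B^j \delta$, and applying $B$ (which is anti-linear) gives $B\,p(B)\delta = \sum_j \overline{c_j}\, B^{j+1}\delta$, which is precisely $(sp^*)(B)\delta$ since $sp^*(s) = \sum_j \overline{c_j}\, s^{j+1}$.

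With this identity in hand, the main computation is short. Using \eqref{eq:sesq_by_B} I would write
\[
[sq_n^*, q_k] = \jap{(sq_n^*)(B)\delta,\, q_k(B)\delta} = \jap{B\,q_n(B)\delta,\, q_k(B)\delta}.
\]
Now I apply the symmetry \eqref{a1} of $B$, namely $\jap{Bx,y} = \jap{By,x}$, with $x = q_n(B)\delta$ and $y = q_k(B)\delta$. This yields
\[
\jap{B\,q_n(B)\delta,\, q_k(B)\delta} = \jap{B\,q_k(B)\delta,\, q_n(B)\delta} = \jap{(sq_k^*)(B)\delta,\, q_n(B)\delta} = [sq_k^*, q_n],
\]
which is exactly the desired symmetry. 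The whole statement thus reduces to the algebraic identity $(sp^*)(B)\delta = B\,p(B)\delta$ combined with a single application of \eqref{a1}.

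For the ``in particular'' clause, I would argue as follows. The polynomial $sq_k^*$ has degree $k+1$, and since $\{q_0, q_1, \dots\}$is the Gram--Schmidt orthonormal sequence with $\deg q_j = j$, any polynomial of degree $k+1$ lies in the span of $q_0, \dots, q_{k+1}$. Because $q_n$ is $[\cdot,\cdot]$-orthogonal to all $q_j$ with $j \neq n$, the pairing $[sq_k^*, q_n]$ vanishes whenever $n > k+1$, i.e. whenever $k < n-1$. By the symmetry just established, $[sq_n^*, q_k] = [sq_k^*, q_n] = 0$ under the same condition. One mild point of care here is that the Gram--Schmidt construction produces polynomials orthonormal in the second slot of the sesquilinear form; since $[\cdot,\cdot]$ is Hermitian (which follows directly from the form of the matrix in \eqref{d2}, whose off-diagonal entries are complex conjugates), orthogonality in one slot gives orthogonality in the other, so no subtlety arises.

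The main obstacle, such as it is, is purely bookkeeping: verifying the identity $(sp^*)(B)\delta = B\,p(B)\delta$ correctly handles the interaction between the anti-linearity of $B$ (which conjugates scalars) and the complex-conjugation of coefficients encoded in $p \mapsto p^*$. Once that identity is pinned down, the rest is an immediate consequence of \eqref{a1} and the orthogonality built into the Gram--Schmidt construction, so I expect no genuine difficulty in the argument.
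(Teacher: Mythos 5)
Your proof is correct and follows essentially the same route as the paper: the identity $(sp^{\ast})(B)\delta=B\,p(B)\delta$ (which the paper writes as $q_n^{\ast}(B)B\delta=Bq_n(B)\delta$), one application of the symmetry \eqref{a1}, and the degree count $\deg(sq_k^{\ast})=k+1<n$ for the vanishing claim. No issues.
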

\begin{proof}
Using \eqref{eq:sesq_by_B}, the symmetry \eqref{a1} of $B$ and the anti-linearity, we find 
\begin{align*}
[sq_{n}^{\ast},q_k]
&=
\jap{q_{n}^{\ast}(B)B\delta,q_k(B)\delta}
=
\jap{Bq_n(B)\delta,q_k(B)\delta} \\
&=
\jap{Bq_k(B)\delta,q_n(B)\delta}
=
\jap{q_{k}^{\ast}(B)B\delta,q_n(B)\delta}
=
[sq_{k}^{\ast},q_n].
\end{align*}
If $k<n-1$, then the right hand side vanishes because the degree of $sq_{k}^{\ast}$ is less than $n$ and $q_n$ is orthogonal to all polynomials of degree $<n$.  
\end{proof}

For a given $n$, since the degree of $sq_{n}^{\ast}$ is $n+1$, we can write 
\[
sq_{n}^{\ast}(s)=\sum_{k=0}^{n+1}\alpha_{n,k} q_k(s),
\]
where $\alpha_{n,k}=[sq_{n}^{\ast},q_{k}]$.
By Lemma~\ref{lem:s_sesq}, all coefficients $\alpha_{n,k}$ with $k<n-1$ vanish, so we have
\[
sq_{n}^{\ast}(s)=\alpha_{n,n+1}q_{n+1}(s)+\alpha_{n,n}q_n(s)+\alpha_{n,n-1}q_{n-1}(s). 
\]
Again by  Lemma~\ref{lem:s_sesq}, we have
\[
\alpha_{n,n+1}=[sq_{n}^{\ast},q_{n+1}]=[sq_{n+1}^{\ast},q_n]=\alpha_{n+1,n}.
\]
Thus, we obtain the three-term recurrence relation \eqref{d2a} with 
\begin{equation}
b_n=[sq_{n}^{\ast},q_n], \quad a_n=[sq_{n}^{\ast},q_{n+1}].
\label{d4}
\end{equation}
It is clear that $a_n\not=0$ for all $n$. 
Finally, by multiplying $q_n$ by suitable unimodular constants, we can make sure that $a_n>0$ for all $n$.

\subsection{The Jacobi matrix $J$}
First we check that the boundedness of $B$ ensures that the Jacobi parameters $a_n$, $b_n$ are bounded. 
\begin{lemma}
Let the Jacobi parameters $a_n$, $b_n$ be as in \eqref{d4}. 
Then 
\[
\abs{a_n}\leq\norm{B}, \quad \abs{b_n}\leq\norm{B}
\]
for all $n$. 
\end{lemma}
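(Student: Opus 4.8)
The plan is to bound the Jacobi parameters by recognising the coefficients $a_n$ and $b_n$ as matrix entries of $B$ in the orthonormal-like system $\{q_n(B)\delta\}$, and then applying the Cauchy--Schwarz inequality together with the bound $\norm{B}$ on the operator norm. Concretely, from the identity \eqref{eq:sesq_by_B} we have $[p,q]=\jap{p(B)\delta,q(B)\delta}$, so the Gram--Schmidt normalisation means that the vectors $e_n:=q_n(B)\delta$ form an orthonormal system in $\calH$ (orthonormality with respect to $[\cdot,\cdot]$ is exactly orthonormality in $\calH$ under this identification). The formulas \eqref{d4} then read
\[
b_n=[sq_n^{\ast},q_n]=\jap{Bq_n(B)\delta,q_n(B)\delta}=\jap{Be_n,e_n},
\quad
a_n=[sq_n^{\ast},q_{n+1}]=\jap{Be_n,e_{n+1}},
\]
where I have used the computation from the proof of Lemma~\ref{lem:s_sesq} that $q_n^{\ast}(B)B\delta=Bq_n(B)\delta$.

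Once the parameters are written as inner products $\jap{Be_n,e_m}$ with $\norm{e_n}=\norm{e_{n+1}}=1$, the bounds follow immediately from Cauchy--Schwarz:
\[
\abs{b_n}=\abs{\jap{Be_n,e_n}}\leq\norm{Be_n}\,\norm{e_n}\leq\norm{B},
\qquad
\abs{a_n}=\abs{\jap{Be_n,e_{n+1}}}\leq\norm{Be_n}\,\norm{e_{n+1}}\leq\norm{B}.
\]
Here I should note that although $B$ is anti-linear, it is still bounded in the sense that $\norm{Bx}\leq\norm{B}\norm{x}$ for all $x$, so the estimate $\norm{Be_n}\leq\norm{B}$ is valid. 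I do not even need the symmetry of $B$ for this step, only its boundedness and the orthonormality of the $e_n$.

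I expect there to be no serious obstacle here; the statement is genuinely elementary once the right interpretation of $a_n,b_n$ is in place. The only point requiring a small amount of care is the translation between the sesquilinear form $[\cdot,\cdot]$ on polynomials and the Hilbert-space inner product on $\calH$ via $p\mapsto p(B)\delta$, and in particular verifying that the Gram--Schmidt-normalised $q_n$ yield genuinely unit vectors $e_n$ in $\calH$. This is immediate from $\norm{q_n(B)\delta}^2=[q_n,q_n]=1$. One should also confirm that the passage from $q_n^{\ast}(B)B\delta$ to $Bq_n(B)\delta$ is exactly the identity already exploited in the proof of Lemma~\ref{lem:s_sesq} (namely $q_n^\ast(B)B=Bq_n(B)$, which follows from anti-linearity applied coefficient-by-coefficient), so that the expressions in \eqref{d4} really are the claimed matrix entries of the anti-linear operator $B$.
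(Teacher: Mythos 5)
Your proof is correct and is essentially the paper's argument in slightly reorganised form: the paper applies Cauchy--Schwarz to the form $[\cdot,\cdot]$ together with the auxiliary bound $[sp^{\ast},sp^{\ast}]\leq\norm{B}^2[p,p]$, whereas you pull everything back to $\calH$ via $e_n=q_n(B)\delta$ and apply Cauchy--Schwarz there directly to $\jap{Be_n,e_m}$ --- the same inequality applied to the same inner products. Your side remark that the symmetry of $B$ is not needed for the estimate $\norm{Be_n}\leq\norm{B}$ is also accurate (the paper's detour through $\jap{B^2p(B)\delta,p(B)\delta}$ is inessential), so no gap here.
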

\begin{proof}
Let us prove the bound
\[
[sp^{\ast},sp^{\ast}]\leq \norm{B}^2[p,p]
\]
for any polynomial $p$. We have, using the symmetry \eqref{a1}, 
\[
[sp^{\ast},sp^{\ast}]
=
\jap{Bp(B)\delta,Bp(B)\delta}
=
\jap{B^2p(B)\delta,p(B)\delta}
\leq
\norm{B}^2\norm{p(B)\delta}^2
=\norm{B}^2[p,p],
\]
as claimed. Using this bound and the Cauchy--Schwarz inequality for $[\cdot,\cdot]$, we find
\[
\abs{b_n}^2
=\abs{[sq_{n}^{\ast},q_n]}^2
\leq
[sq_{n}^{\ast},sq_{n}^{\ast}]
[q_n,q_n]
\leq
\norm{B}^2[q_n,q_n]^2
=
\norm{B}^2
\]
because the polynomials $q_n$ are normalised. In the same way, 
\[
\abs{a_n}^2
=\abs{[sq_{n}^{\ast},q_{n+1}]}^2
\leq
[sq_{n}^{\ast},sq_{n}^{\ast}][q_{n+1},q_{n+1}]
\leq
\norm{B}^2[q_n,q_n][q_{n+1},q_{n+1}]=\norm{B}^2,
\]
as required. 
\end{proof}
Now let $J=J(a,b)$ be the Jacobi matrix with the Jacobi parameters as in \eqref{d4}. 
By the previous lemma, $J$ is bounded in $\ell^2(\bbZ_+)$. 
We denote by $e_n$, $n\in\bbZ_+$, the elements of the standard basis in $\ell^2(\bbZ_+)$. 

\begin{lemma}\label{lma.d4}
For all $m\in\bbZ_{+}$ we have 
\begin{equation}
q_m(J\calC_{\ell^2})e_0=e_m.
\label{d5}
\end{equation}
\end{lemma}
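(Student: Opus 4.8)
The goal is to prove the identity $q_m(J\calC_{\ell^2})e_0 = e_m$ for all $m \in \bbZ_+$, where $q_m$ are the anti-orthogonal polynomials and $J$ is the Jacobi matrix built from the parameters $a_n, b_n$ defined in \eqref{d4}.

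The plan is to proceed by induction on $m$. For the base case $m=0$, since $q_0 \equiv 1$ and any constant polynomial applied to an anti-linear operator just gives the scalar times the identity action on the cyclic vector, I would directly compute $q_0(J\calC_{\ell^2})e_0 = e_0$. The crux is the inductive step, and the key insight is that the abstract recurrence \eqref{d2a} satisfied by the polynomials $q_n$ must be matched against the concrete action of the operator $J\calC_{\ell^2}$ on the standard basis vectors.

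First I would establish how $J\calC_{\ell^2}$ acts on a basis vector. Applying $\calC_{\ell^2}$ to $e_m$ leaves it unchanged (it is a real vector), and then $J e_m$ picks out the $m$-th column of $J$, giving $(J\calC_{\ell^2})e_m = a_{m-1}e_{m-1} + b_m e_m + a_m e_{m+1}$, with the convention $a_{-1}=0$. The subtle point that will be the main obstacle is handling the \emph{anti-linearity} correctly when applying the operator polynomial $q_m(J\calC_{\ell^2})$: the operator $J\calC_{\ell^2}$ is anti-linear, so $q_m(J\calC_{\ell^2})$ involves complex-conjugating the coefficients of $q_m$ at each odd power. This is precisely why the recurrence \eqref{d2a} is written with $q_n^{\ast}$ (conjugated coefficients) on the left-hand side rather than $q_n$ itself. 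I would carefully track that when the anti-linear operator acts on $q_n(J\calC_{\ell^2})e_0$, the result naturally produces the $q_n^\ast$ structure, so that applying $J\calC_{\ell^2}$ to the inductive hypothesis $q_n(J\calC_{\ell^2})e_0 = e_n$ yields exactly $(J\calC_{\ell^2})e_n = a_{n-1}e_{n-1}+b_n e_n + a_n e_{n+1}$.

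The inductive step then reads as follows: assuming \eqref{d5} holds for all indices up to $n$, I apply $J\calC_{\ell^2}$ to both sides of $q_n(J\calC_{\ell^2})e_0 = e_n$. On the operator side this produces $sq_n^{\ast}$ evaluated at $J\calC_{\ell^2}$ applied to $e_0$, because the extra factor of $J\calC_{\ell^2}$ multiplies by $s$ while conjugating the coefficients of $q_n$. Invoking the recurrence \eqref{d2a}, the left side becomes $a_n q_{n+1}(J\calC_{\ell^2})e_0 + b_n q_n(J\calC_{\ell^2})e_0 + a_{n-1}q_{n-1}(J\calC_{\ell^2})e_0$, while the right side is $(J\calC_{\ell^2})e_n = a_{n-1}e_{n-1}+b_n e_n + a_n e_{n+1}$. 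Using the inductive hypothesis to replace $q_n(J\calC_{\ell^2})e_0 = e_n$ and $q_{n-1}(J\calC_{\ell^2})e_0 = e_{n-1}$, and equating the two expressions, the terms in $e_{n-1}$ and $e_n$ cancel (using $a_{n-1}>0$ and that $a_n>0$ is real), leaving $a_n q_{n+1}(J\calC_{\ell^2})e_0 = a_n e_{n+1}$. Since $a_n>0$, dividing yields $q_{n+1}(J\calC_{\ell^2})e_0 = e_{n+1}$, completing the induction. The only genuine care needed throughout is bookkeeping the coefficient conjugation forced by anti-linearity, which is exactly what the $q_n^\ast$ notation in \eqref{d2a} is designed to encode.
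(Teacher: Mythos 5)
Your proposal is correct and follows essentially the same route as the paper: induction on $m$, using the anti-linearity of $J\calC_{\ell^2}$ to identify $J\calC_{\ell^2}\,q_n(J\calC_{\ell^2})e_0$ with $(sq_n^{\ast})(J\calC_{\ell^2})e_0$, expanding via the recurrence \eqref{d2a}, and comparing with the explicit action $J\calC_{\ell^2}e_n=Je_n$. The point you flag as the main subtlety --- the coefficient conjugation absorbed by the $q_n^{\ast}$ notation --- is exactly the one the paper notes parenthetically in its own proof.
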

\begin{proof}
The proof proceeds by induction. The relation \eqref{d5} is evidently true for $m=0$. Suppose it is true for all $m\leq n$. Let us evaluate the polynomials on both sides of the recurrence relation \eqref{d2a} on the anti-linear operator $J\calC_{\ell^2}$ and apply to the element $e_0$: 
\[
J\calC_{\ell^2} q_n(J\calC_{\ell^2})e_0=a_nq_{n+1}(J\calC_{\ell^2})e_0+b_nq_n(J\calC_{\ell^2})e_0+a_{n-1}q_{n-1}(J\calC_{\ell^2})e_0
\]
(complex conjugation over $q_n$ has disappeared because of $\calC_{\ell^2}$ on the left).
By the induction hypothesis, this rewrites as
\[
J\calC_{\ell^2} e_n=a_nq_{n+1}(J\calC_{\ell^2})e_0+b_ne_n+a_{n-1}e_{n-1}.
\]
On the other hand, by the definition of $J$, we find
\[
J\calC_{\ell^2} e_n=Je_n=a_ne_{n+1}+b_ne_n+a_{n-1}e_{n-1}.
\]
Comparing, we find $q_{n+1}(J\calC_{\ell^2})e_0=e_{n+1}$. 
\end{proof}

\subsection{Construction of the unitary map $V$}
Let $J$ be as constructed above. 
As in the proof of Theorem~\ref{thm1}, we set
\[
Vf(B)\delta:=f(J\calC_{\ell^2})e_0
\]
for any polynomial $f$. In particular, $V\delta=e_0$. 

Let us check that $V$ is unitary. Consider the set $\{q_n(B)\delta\}_{n=0}^\infty$ in $\calH$. By the construction of the polynomials $q_n$ and \eqref{eq:sesq_by_B}, this set is orthonormal in $\calH$:
\[
\jap{q_n(B)\delta,q_m(B)\delta}
=
[q_n,q_m]=\delta_{n,m}.
\]
Furthermore, since $\deg q_{n}=n$ for all $n\in\bbZ_{+}$, we see that the linear span of the polynomials $q_n$ is the space of all polynomials. Using the cyclicity assumption \eqref{a6}, from here we find that the orthonormal set $\{q_n(B)\delta\}_{n=0}^\infty$ is complete in $\calH$. By Lemma~\ref{lma.d4}, we find 
\[
Vq_n(B)\delta=q_n(J\calC_{\ell^2})e_0=e_n,
\]
and so $V$ is an isometry onto $\ell^2(\bbZ_+)$. The intertwining relation $VB=J\calC_{\ell^2} V$ follows from the definition of $V$. 

\subsection{Uniqueness of $a_n$ and $b_n$}

Let $a_n>0$ and $b_n\in\bbC$ be a bounded sequence of Jacobi parameters, and let $J$ be the corresponding Jacobi matrix. Suppose that there exists a unitary map $V:\calH\to\ell^2(\bbZ_+)$ with $V\delta=e_0$ and $VB=(J\calC_{\ell^2})V$. We need to prove that the Jacobi parameters $a_n$ and $b_n$ are uniquely defined by these conditions. 

Let $q_n$ be the sequence of polynomials defined by the three-term recurrence relation \eqref{d2a} with the initial conditions $q_{-1}=0$, $q_0=1$. 
Clearly, $\deg q_{n}=n$.
As in Lemma~\ref{lma.d4}, one verifies by induction that
\[
q_m(J\calC_{\ell^2})e_0=e_m 
\]
for all $m\in\bbZ_{+}$.
It follows that
\begin{align}
\delta_{n,m}&=\jap{e_n,e_m}=\jap{q_n(J\calC_{\ell^2})e_0,q_m(J\calC_{\ell^2})e_0}
=\jap{q_n(J\calC_{\ell^2})V\delta,q_m(J\calC_{\ell^2})V\delta}
\notag
\\
&=\jap{Vq_n(B)\delta,Vq_m(B)\delta}
=\jap{q_n(B)\delta,q_m(B)\delta}
=[q_n,q_m],
\label{d8}
\end{align}
i.e. the sequence of polynomials $\{q_n\}_{n=0}^\infty$ is orthonormal with respect to the sesquilinear form $[\cdot,\cdot]$. This, together with the condition $\deg q_n=n$, uniquely 
determines $q_n$ up to multiplication by unimodular constants (i.e. $q_n$ can be obtained by the Gram--Schmidt process from the sequence $1,s,s^2,\dots$).
Further, we have
\[
b_n=\jap{J\calC_{\ell^2} e_n,e_n}, \quad a_n=\jap{J\calC_{\ell^2} e_n, e_{n+1}}, \quad n\geq0.
\]
In the same way as in \eqref{d8} we find
\begin{align}
a_n&=\jap{J\calC_{\ell^2} q_n(J\calC_{\ell^2})e_0, q_{n+1}(J\calC_{\ell^2})e_0}=\jap{Bq_n(B)\delta,q_{n+1}(B)\delta}
=[sq_{n}^{\ast},q_{n+1}],
\label{d10}
\\
b_n&=\jap{J\calC_{\ell^2} q_n(J\calC_{\ell^2})e_0, q_{n}(J\calC_{\ell^2})e_0}=\jap{Bq_n(B)\delta,q_{n}(B)\delta}
=[sq_{n}^{\ast},q_{n}].
\label{d11}
\end{align}
Since by assumption $a_n>0$ for all $n\in\bbZ_{+}$, we find 
\begin{equation}
[sq_{n}^{\ast},q_{n+1}]>0, \quad n\geq0.
\label{d9}
\end{equation}
The normalisation condition \eqref{d9} (together with $q_0=1$) fixes the unimodular constant in the construction of $q_n$. Thus, the sequence of polynomials $q_n$ is uniquely defined. Now \eqref{d10}, \eqref{d11} uniquely define the Jacobi parameters $a_n$ and $b_n$. 

\section{Example}\label{sec.d}

\subsection{The set-up and formulas for the spectral data}
The goal of this section is to compute explicitly the spectral data $(\nu,\psi)$ of the anti-linear tridiagonal operator $B=J\calC_{\ell^2}$, where the Jacobi matrix $J$ is given by 
\begin{equation}
 J=\begin{pmatrix}
 \omega & 1 & 0 & 0 & \dots \\
1 & 0 & 1 & 0 &  \dots \\
0 & 1 & 0 & 1 &  \dots \\
0 & 0 & 1 & 0 &  \dots \\
 \vdots & \vdots & \vdots & \vdots & \ddots \\
 \end{pmatrix}, \quad \omega\in\bbC, 
 \label{e1}
\end{equation}
i.e. our Jacobi parameters are 
\[
 b_{n}=\omega\delta_{n,0} \quad\mbox{ and }\quad a_{n}=1.
\]
In other words, $J$ is the discrete Schr{\" o}dinger operator with the Dirac delta potential supported at $0$ and a complex coupling constant; this operator has been used to prove optimality of spectral enclosures in~\cite{kre-lap-sta_22}. The cyclic vector $\delta$ coincides with~$e_{0}$.

In order to compare the formulas of this section with the rest of the paper, the reader may want to keep in mind that
\[
B^2=J\calC_{\ell^2}J\calC_{\ell^2}=JJ^*
\]
and therefore 
\[
\abs{B}=\sqrt{JJ^*}=\abs{J^*}.
\] 

Our aim is to prove the following proposition. 

\begin{proposition}\label{prp.d1}
The spectral data $(\nu,\psi)$ of the anti-linear tridiagonal operator $J\calC_{\ell^2}$ defined by \eqref{e1} with $\omega\in\bbC$, are as follows. The measure $\nu=\nu_{\mathrm{ac}}+\nu_{\mathrm{s}}$, where $\nu_{\mathrm{ac}}$ is absolutely continuous and supported on $[0,2]$ with the density
\[
\frac{\dd\nu_{\mathrm{ac}}}{\dd s}(s)=\frac{|\omega|^{2}+1}{\pi}\frac{\sqrt{4-s^{2}}}{(1+|\omega|^{2})^{2}-|\omega|^{2}s^{2}}.
\]
The singular part $\nu_{\mathrm{s}}$ is non-trivial if and only if $|\omega|>1$, in which case it reads
\[
\nu_{\mathrm{s}}=
 \frac{|\omega|^{2}-1}{|\omega|^{2}}\,\delta_{|\omega|+|\omega|^{-1}},
\]
i.e. $\nu_{\mathrm{s}}$ is a point mass at $|\omega|+|\omega|^{-1}$ with the weight $\frac{|\omega|^{2}-1}{|\omega|^{2}}$. 
The phase function $\psi$ satisfies 
\[
\psi(s)=\frac{\omega s}{1+|\omega|^{2}}
\]
for $s\in[0,2]$, and moreover, if $|\omega|>1$,
\[
\psi(|\omega|+|\omega|^{-1})=\frac{\omega}{|\omega|}.
\]
\end{proposition}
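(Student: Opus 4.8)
The plan is to reduce the computation of the spectral data to two scalar Cauchy transforms of the \emph{linear} positive operator $B^2=JJ^*$ and then to recover $\nu$ and $\psi$ by Stieltjes--Perron inversion. Taking $f(s)=(s^2-\zeta)^{-1}$ (with $\zeta\in\bbC\setminus[0,\infty)$) in the defining relations \eqref{a2} and \eqref{a3}, and using $\abs{B}^2=B^2=JJ^*$ together with $Be_0=Je_0$ (since $\calC_{\ell^2}e_0=e_0$), I obtain
\[
g(\zeta):=\jap{(JJ^*-\zeta)^{-1}e_0,e_0}=\int_0^\infty\frac{\dd\nu(s)}{s^2-\zeta},\qquad
h(\zeta):=\jap{(JJ^*-\zeta)^{-1}Je_0,e_0}=\int_0^\infty\frac{s\psi(s)}{s^2-\zeta}\dd\nu(s).
\]
Thus it suffices to find $g$ and $h$ in closed form: the a.c.\ density and point masses of $\nu$ are read off from the jump of $g$ across $[0,\infty)$ and from its poles, and $\psi$ is then recovered by comparing $h$ with $g$.

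The next step is to compute $g$ and $h$ explicitly. Writing $J=J_0+\omega\jap{\cdot,e_0}e_0$, where $J_0$ is the free half-line Jacobi matrix with $a_n=1$, $b_n=0$, one gets $JJ^*=J_0^2+V$ with the rank-two perturbation $V=\abs{\omega}^2\jap{\cdot,e_0}e_0+\omega\jap{\cdot,e_1}e_0+\overline{\omega}\jap{\cdot,e_0}e_1$ supported on $\Span\{e_0,e_1\}$ (using $J_0e_0=e_1$). The restriction of $(J_0^2-\zeta)^{-1}$ to this subspace is computed from the classical free Green's function $\jap{(J_0-w)^{-1}e_i,e_j}$, expressed through $\xi=\xi(w)$, the root of $\xi+\xi^{-1}=w$ with $\abs{\xi}<1$; the partial-fraction identity $(\lambda^2-\zeta)^{-1}=\tfrac1{2\sqrt\zeta}[(\lambda-\sqrt\zeta)^{-1}-(\lambda+\sqrt\zeta)^{-1}]$ and the parity $\xi(-w)=-\xi(w)$ make the off-diagonal $\{e_0,e_1\}$-block of $(J_0^2-\zeta)^{-1}$ vanish, leaving a diagonal matrix. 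A $2\times2$ Krein/Woodbury inversion then yields, with $X:=\xi(\sqrt\zeta)^2$,
\[
g(\zeta)=\frac{-X}{(1-\abs{\omega}^2X)(1+X)},\qquad h(\zeta)=\frac{-\omega X}{1-\abs{\omega}^2X},\qquad \zeta=\frac{(1+X)^2}{X}.
\]

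The final step is the inversion. For $s\in(0,2)$ one has $\zeta=s^2\in(0,4)$ and the boundary values of $X$ lie on the unit circle, $X_\pm=e^{\mp\ii\beta}$ with $\cos\beta=\tfrac{s^2}{2}-1$ and $\sin\beta=\tfrac12 s\sqrt{4-s^2}$; the Stieltjes--Perron formula applied to $g$ gives the stated absolutely continuous density, while the same formula applied to $h$ gives the a.c.\ part of $s\psi(s)\dd\nu(s)$, and dividing out produces $\psi(s)=\omega s/(1+\abs{\omega}^2)$ on $[0,2]$. For the singular part, the only pole of $g$ reachable on the physical sheet $\abs{X}<1$ is at $X=\abs{\omega}^{-2}$, which lies inside the unit disc precisely when $\abs{\omega}>1$ and corresponds to $\zeta_0=(\abs{\omega}+\abs{\omega}^{-1})^2$. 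Computing $\Res_{\zeta=\zeta_0}g$ via $\zeta'(X)=1-X^{-2}$ gives the point mass $\abs{\omega}^{-2}(\abs{\omega}^2-1)$ at $s=\abs{\omega}+\abs{\omega}^{-1}$, and the analogous residue of $h$ gives $\psi(\abs{\omega}+\abs{\omega}^{-1})=\omega/\abs{\omega}$.

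The conceptual reduction in the first paragraph is routine once one works with $B^2=JJ^*$; the real work lies in the closed-form computation and the inversion. The delicate points are keeping track of the rank-two inversion (which is eased considerably by the parity-induced vanishing of the off-diagonal free resolvent) and, above all, making the correct branch choice for $\xi$ on the cut so that the Stieltjes inversion produces a positive density and correctly separates the absolutely continuous spectrum on $[0,2]$ from the single bound state at $\abs{\omega}+\abs{\omega}^{-1}$ that appears only for $\abs{\omega}>1$. The extraction of the \emph{complex} phase $\psi$ from the non-self-adjoint transform $h$ is the step requiring the most care, since here the two boundary values are genuinely independent and one must not assume the Schwarz-reflection symmetry available for $g$.
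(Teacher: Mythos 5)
Your proposal is correct and its overall strategy coincides with the paper's: reduce to the two resolvent entries $\jap{(JJ^*-\zeta)^{-1}e_0,e_0}$ and $\jap{(JJ^*-\zeta)^{-1}Je_0,e_0}$, compute them in closed form in the Joukowsky variable by treating $JJ^*$ as a finite-rank perturbation of an explicitly solvable operator, and recover $(\nu,\psi)$ by Stieltjes--Perron inversion; your closed forms for $g$ and $h$ are exactly $-F$ and $-G$ from \eqref{eq:res_JJ*_00}--\eqref{eq:res_JJ*_01}, and all four outputs (the a.c.\ density, the point mass, $\psi$ on $[0,2]$, and $\psi$ at the eigenvalue) check out. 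You differ in two technical respects. First, the paper decomposes $JJ^*=T+2+X\oplus 0$ with $T$ a block-Toeplitz ``doubled shift'' whose resolvent is written down directly, while you use $JJ^*=J_0^2+V$ and obtain the compressed resolvent of $J_0^2$ from the free Green's function of $J_0$ via partial fractions; your parity argument for the vanishing of the $(e_0,e_1)$ entry is valid (since $DJ_0D=-J_0$ for $D=\diag((-1)^n)$, the free Green's function satisfies $G_{mn}(-w)=(-1)^{m+n+1}G_{mn}(w)$), and it reduces the Krein/Woodbury step to inverting a $2\times2$ matrix exactly as in the paper. Second, the paper symmetrises before inverting, working with the Cauchy transforms of the even extension $\nu^{\ee}$ and of $s\psi^{\oo}(s)\,\dd\nu^{\ee}(s)$ on the whole real line, whereas you invert directly in the variable $\zeta=s^2$ on the half-line; this is equally legitimate, but you should make explicit the Jacobian $\dd\zeta=2s\,\dd s$ when converting the recovered density in $\zeta$ into the stated density in $s$, and the corresponding fact that the pushforward of $s\psi(s)\,\dd\nu(s)$ has $\zeta$-density $\tfrac12\psi(s)\,\frac{\dd\nu}{\dd s}(s)$, since that is the one place a factor of $2$ can silently go missing. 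Your handling of the complex measure in $h$ via two independent boundary values (rather than an imaginary part), and of the residues at $X=\abs{\omega}^{-2}$ via $\zeta'(X)=1-X^{-2}$, is correct.
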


Recalling Proposition~\ref{prp.a4}, we see that, in particular, the multiplicity of the spectrum of $\abs{J^*}$ is two on the interval $[0,2]$. This should not be surprising; indeed, as is well known, for $\omega=0$, the matrix $J_{\omega=0}$ is explicitly diagonalisable and has the purely absolutely continous spectrum $[-2,2]$ of multiplicity one. Thus, $\abs{J_{\omega=0}}$ has the spectrum $[0,2]$ of multiplicity two. For a general $\omega$, the operator $JJ^*$ is a finite rank perturbation of $(J_{\omega=0})^2$, hence (by the Kato--Rosenblum theorem) the absolutely continuous spectra of these operators coincide and have the same multiplicity.

In the rest of this section, we prove Proposition~\ref{prp.d1}. In Section~\ref{sec.d6} at the end, we also relate the anti-orthogonal polynomials $q_n$ corresponding to $J$ to Chebyshev polynomials of the first and second kind.

\subsection{The resolvent of $JJ^{*}$}
Our first task is to compute the matrix elements of the resolvent $(z^2-JJ^*)^{-1}$. 
The matrix $JJ^{*}$ has the penta-diagonal structure
\[
 JJ^{*}=\begin{pmatrix}
 1+|\omega|^{2} & \omega & 1 & 0 & 0 & \dots \\
 \overline{\omega} & 2 & 0 & 1 & 0 & \dots \\
 1 & 0 & 2 & 0 & 1 & \dots \\
 0 & 1 & 0 & 2 & 0 & \dots \\
 0 & 0 & 1 & 0 & 2 & \dots \\
 \vdots & \vdots & \vdots & \vdots & \vdots & \ddots \\
 \end{pmatrix}.
\]
It is advantageous to view $JJ^{*}$ as a tridiagonal block matrix with $2\times 2$ blocks. Moreover, the top-left diagonal $2\times 2$ block can be considered as a perturbation. We write
\[
 JJ^{*}=T+2+X\oplus 0,
\]
where
\[
 T:=\begin{pmatrix}
 0 & 0 & 1 & 0 & 0 & \dots \\
 0 & 0 & 0 & 1 & 0 & \dots \\
 1 & 0 & 0 & 0 & 1 & \dots \\
 0 & 1 & 0 & 0 & 0 & \dots \\
 0 & 0 & 1 & 0 & 0 & \dots \\
 \vdots & \vdots & \vdots & \vdots & \vdots & \ddots \\
 \end{pmatrix}
 \quad\mbox{ and }\quad
 X:=\begin{pmatrix}
 |\omega|^{2}-1 & \omega \\
 \overline{\omega} & 0 
 \end{pmatrix}.
\]

First, we determine matrix entries of the resolvent of the Toeplitz matrix $T$. It is natural to transform the spectral parameter by the Joukowsky map $\xi\mapsto\xi+\xi^{-1}$ which is a bijective mapping from the punctured unit disk $0<|\xi|<1$ onto $\bbC\setminus[-2,2]$. Then it is straightforward to check that $\sigma(T)=[-2,2]$ and 
\[
R_{T}(\xi):=(\xi+\xi^{-1}-T)^{-1} 
\]
is a symmetric $2\times 2$ block matrix
\[
 R_{T}(\xi)=\begin{pmatrix}
 R_{0,0}(\xi) & R_{0,1}(\xi) & R_{0,2}(\xi) &\dots \\
 R_{1,0}(\xi) & R_{1,1}(\xi) & R_{1,2}(\xi) &\dots \\
 R_{2,0}(\xi) & R_{2,1}(\xi) & R_{2,2}(\xi) &\dots \\
 \vdots & \vdots & \vdots & \ddots 
 \end{pmatrix}
\]
with the $2\times 2$ blocks
\[
 R_{m,n}(\xi):=\xi^{|m-n|+1}\,\frac{1-\xi^{2\min(m,n)+2}}{1-\xi^{2}}
 \begin{pmatrix}
 1 & 0 \\ 0 & 1
 \end{pmatrix}
\]
and $0<|\xi|<1$ (only $R_{0,0}(\xi)$ will be needed below).

For $\xi\notin(-1,1)$ we have 
\[
(2+\xi+\xi^{-1}-JJ^{*})^{-1}=(\xi+\xi^{-1}-T-X\oplus 0)^{-1}=R_{T}(\xi)(1-(X\oplus 0)R_{T}(\xi))^{-1}.
\]
Denote by $P_{01}$ the orthogonal projection onto the two-dimensional subspace of $\ell^{2}(\bbZ_{+})$ spanned by $e_{0}$ and $e_{1}$. Since $P_{01}(X\oplus 0)=XP_{01}$ we further deduce the formula
\[
 P_{01}(2+\xi+\xi^{-1}-JJ^{*})^{-1}P_{01}=R_{0,0}(\xi)(1-XR_{0,0}(\xi))^{-1}=\left(R_{0,0}^{-1}(\xi)-X\right)^{-1}.
\]
From the explicit computation
\begin{align*}
\left(R_{0,0}^{-1}(\xi)-X\right)^{-1}&=
\begin{pmatrix}
1+\xi^{-1}-|\omega|^{2} & -\omega \\
 -\overline{\omega} & \xi^{-1} 
\end{pmatrix}^{-1}\\
&=\frac{\xi}{(1+\xi)(1-|\omega|^{2}\xi)}
\begin{pmatrix}
1 & \omega\xi \\
 \overline{\omega}\xi & 1+(1-|\omega|^{2})\xi 
\end{pmatrix},
\end{align*}
we find the two matrix entries of the resolvent of $JJ^{*}$,
\begin{equation}
(2+\xi+\xi^{-1}-JJ^{*})^{-1}_{0,0}=\frac{\xi}{(1+\xi)(1-|\omega|^{2}\xi)}
\label{eq:res_JJ*_00}
\end{equation}
and
\begin{equation}
(2+\xi+\xi^{-1}-JJ^{*})^{-1}_{0,1}=\frac{\omega\xi^{2}}{(1+\xi)(1-|\omega|^{2}\xi)},
\label{eq:res_JJ*_01}
\end{equation}
which will be essential for the proof.

\subsection{Even and odd extensions and Cauchy-Stieltjes transforms}
We recall that $B^2=J\calC_{\ell^2}J\calC_{\ell^2}=JJ^*$. 
Taking $f(s)=(z^2-s^2)^{-1}$ with $\Im z^2\not=0$ in \eqref{a2}, \eqref{a3}, we find
\begin{align}
F(z^2)&:=\jap{(z^2-JJ^*)^{-1}e_0,e_0}=\int_0^\infty \frac{\dd\nu(s)}{z^2-s^2},
\label{e2}
\\
G(z^2)&:=\jap{(z^2-JJ^*)^{-1}Je_0,e_0}=\int_0^\infty \frac{s\psi(s)\dd\nu(s)}{z^2-s^2}. \nonumber
\end{align}
Next, it is convenient to define the measure $\nu^{\ee}$ on the real line as a $\frac12\times$ the \emph{even} extension of $\nu$, viz.
\[
\nu^{\ee}:=\frac12(\nu+\widetilde\nu), 
\quad\text{ where }\quad
\widetilde\nu(\Delta):=\nu(-\Delta)
\]
for any Borel set $\Delta\subset\bbR$. 
Furthermore, let $\psi^{\oo}$ be the \emph{odd} extension of $\psi$ to the real line. Then we find 
\begin{equation}
zF(z^2)=\int_{-\infty}^\infty \frac{\dd\nu^{\ee}(s)}{z-s},
\quad
zG(z^2)=\int_{-\infty}^\infty \frac{s\psi^{\oo}(s)\dd\nu^{\ee}(s)}{z-s}.
\label{e4}
\end{equation}

Setting $z=\xi+\xi^{-1}$ in \eqref{e2} and using \eqref{eq:res_JJ*_00}, we find 
\[
F(2+\xi^2+\xi^{-2})=(2+\xi^2+\xi^{-2}-JJ^*)^{-1}_{0,0}
=\frac{\xi^2}{(1+\xi^2)(1-|\omega|^{2}\xi^2)}.
\]
Combining this with \eqref{e4}, we obtain
\begin{equation}
\int_{-\infty}^\infty\frac{\dd\nu^{\ee}(s)}{\xi+\xi^{-1}-s}
=(\xi+\xi^{-1})\frac{\xi^2}{(1+\xi^2)(1-\abs{\omega}^2\xi^2)}
=\frac{\xi}{1-\abs{\omega}^2\xi^2}=:Z_{\nu}(\xi+\xi^{-1})
\label{eq:cauchy_nu_even_example}
\end{equation}
for the Cauchy-Stieltjes transform of $\nu^{\ee}$. 

Next, observe that $Je_0=\omega e_0+e_1$, and therefore 
\begin{align*}
G(2+\xi^2+\xi^{-2})
&=\omega (2+\xi^2+\xi^{-2}-JJ^{*})^{-1}_{0,0}+(2+\xi^2+\xi^{-2}-JJ^{*})^{-1}_{0,1}
\\
&=\frac{\omega\xi^2}{1-\abs{\omega}^2\xi^2},
\end{align*}
by~\eqref{eq:res_JJ*_00}, \eqref{eq:res_JJ*_01}.
Again, combining with \eqref{e4}, we find
\begin{equation}
\int_{-\infty}^\infty\frac{s\psi^{\oo}(s)\dd\nu^{\ee}(s)}{\xi+\xi^{-1}-s}
=\frac{\omega\xi(1+\xi^2)}{1-\abs{\omega}^2\xi^2}=:Z_{\psi}(\xi+\xi^{-1})
\label{eq:cauchy_zeta_example}
\end{equation}
for the Cauchy-Stieltjes transform of the complex measure $s\psi^{\oo}(s)\dd\nu^{\ee}(s)$. 
Now we use the Stieltjes inversion formula to recover $\nu$ and $\psi$.

\subsection{The measure $\nu$}
We use the Stieltjes inversion formula to reconstruct $\nu^{\ee}$ from its Cauchy--Stieltjes transform~\eqref{eq:cauchy_nu_even_example}. 
If $|\omega|>1$, the measure $\nu^{\ee}$ has two atoms corresponding to $\xi=\pm|\omega|^{-1}$ with equal weights. One readily computes that
\[
 \nu^{\ee}(\{|\omega|+|\omega|^{-1}\})=\lim_{\xi\to|\omega|^{-1}}(\xi+\xi^{-1}-|\omega|-|\omega|^{-1})Z_{\nu}(\xi+\xi^{-1})=\frac{|\omega|^{2}-1}{2|\omega|^{2}}.
\]
It follows that the singular part of $\nu$ reads
\[
 \nu_{\text{s}}=\frac{|\omega|^{2}-1}{|\omega|^{2}}\delta_{|\omega|+|\omega|^{-1}},
\]
provided that $|\omega|>1$. If $|\omega|\leq1$, the singular part of $\nu^{\ee}$, and so of $\nu$, is absent.

For any $\omega\in\bbC$, the absolutely continuous part of $\nu^{\ee}$ is supported on $[-2,2]$ with the density 
\begin{align*}
\frac{\dd\nu_{\text{ac}}^{\ee}}{\dd s}(s)&=\frac{1}{2\pi\ii}\lim_{\varepsilon\to0+}\left(Z_{\nu}(s-\ii\varepsilon)-Z_{\nu}(s+\ii\varepsilon)\right)\\
&=\frac{1}{2\pi\ii}\lim_{\substack{\xi\to e^{\ii\phi} \\ |\xi|<1}}\left(Z_{\nu}(\xi+\xi^{-1})-Z_{\nu}(\overline{\xi}+\overline{\xi}^{-1})\right),
\end{align*}
where $s=2\cos\phi$ and $\phi\in(0,\pi)$. Substituting from~\eqref{eq:cauchy_nu_even_example}, we obtain explicitly
\[
\frac{\dd\nu_{\text{ac}}^{\ee}}{\dd s}(s)=\frac{|\omega|^{2}+1}{\pi}\frac{\sin\phi}{1-2|\omega|^{2}\cos2\phi+|\omega|^{4}}
=\frac{|\omega|^{2}+1}{2\pi}\frac{\sqrt{4-s^{2}}}{(1+|\omega|^{2})^{2}-|\omega|^{2}s^{2}}
\]
for $s\in[-2,2]$, which implies the formula
\[
\frac{\dd\nu_{\text{ac}}}{\dd s}(s)=\frac{|\omega|^{2}+1}{\pi}\frac{\sqrt{4-s^{2}}}{(1+|\omega|^{2})^{2}-|\omega|^{2}s^{2}}, \quad s\in[0,2].
\]

\subsection{The phase function $\psi$}
Similarly to the previous part, we employ the Stieltjes inversion formula to see that the absolutely continuous part of the complex measure $s\psi^{\oo}(s)\dd\nu^{\ee}(s)$ is supported on $[-2,2]$ with the density
\begin{align*}
s\psi^{\oo}(s)
\frac{\dd\nu^{\ee}_{\text{ac}}}{\dd s}(s)
&=\frac{1}{2\pi\ii}\lim_{\xi\to e^{\ii\phi}}\left(Z_{\psi}(\xi+\xi^{-1})-Z_{\psi}(\overline{\xi}+\overline{\xi}^{-1})\right)
\\
&=\frac{4\omega}{\pi}\frac{\sin\phi\cos^{2}\phi}{1-2|\omega|^{2}\cos2\phi+|\omega|^{4}},
\end{align*}
where $s=2\cos\phi$, $\phi\in(0,\pi)$. 
From here we obtain 
\[
 \psi(s)=\frac{\omega s}{1+|\omega|^{2}}
\]
for $s\in[0,2]$.
If $|\omega|>1$, a straightforward computation based on~\eqref{eq:cauchy_zeta_example} yields
\[
 \psi(|\omega|+|\omega|^{-1})=\frac{\omega}{|\omega|}.
\]
The proof of Proposition~\ref{prp.d1} is complete. \qed

\subsection{The anti-orthogonal polynomials}
\label{sec.d6}

Finally, we briefly examine the polynomials $q_{n}$ defined recursively by the equations
\[
 q_{n-1}(s)+\omega\delta_{n,0}q_{n}(s)+q_{n+1}(s)=sq_{n}^{\ast}(s), \quad n\geq0,
\]
with $q_{-1}(s)=0$ and $q_{0}(s)=1$. We show that they are expressible in terms of the classical Chebyshev polynomials of the first and second kind $T_{n}$ and $U_{n}$. 

Recall that both polynomial sequences $T_{n}$ and $U_{n}$ satisfy the recurrence
\begin{equation}
 u_{n-1}(x)-2xu_{n}(x)+u_{n+1}(x)=0, \quad n\geq1,
\label{eq:cheb_recur}
\end{equation}
and differ by the initial conditions: $T_{0}(x)=1$, $T_{1}(x)=x$ and $U_{0}(x)=1$, $U_{1}(x)=2x$; see for example~\cite[Sec.~10.11]{erd-etal_81}.
It follows that Chebyshev polynomials satisfy the well known identities
\[
 T_{n}(\cos\theta)=\cos n\theta
 \quad\mbox{ and }\quad
 U_{n}(\cos\theta)=\frac{\sin(n+1)\theta}{\sin\theta}.
\]
When expressed in slightly different terms, we get the formulas
\begin{equation}
T_{n}\left(\frac{\xi+\xi^{-1}}{2}\right)=\frac{\xi^{n}+\xi^{-n}}{2}
\quad\mbox{ and }\quad
U_{n}\left(\frac{\xi+\xi^{-1}}{2}\right)=\frac{\xi^{n+1}-\xi^{-n-1}}{\xi-\xi^{-1}}
\label{eq:cheb_id1}
\end{equation}
for all $n\in\bbZ_{+}$

\begin{proposition}
For all $n\in\bbZ_{+}$ and $\omega\in\bbC$ we have the identities
\begin{equation}
 q_{2n}(s)=\frac{2-2\overline{\omega}s}{2-s^{2}}\,T_{2n}\!\left(\frac{s}{2}\right)+\frac{2\overline{\omega}-s}{2-s^{2}}\,U_{2n+1}\!\left(\frac{s}{2}\right)
\label{eq:q_even_cheb}
\end{equation}
and
\begin{equation}
 q_{2n+1}(s)=-\frac{2\omega}{2-s^{2}}\,T_{2n}\!\left(\frac{s}{2}\right)+\frac{2+\omega s-s^{2}}{2-s^{2}}\,U_{2n+1}\!\left(\frac{s}{2}\right).
\label{eq:q_odd_cheb}
\end{equation}
\end{proposition}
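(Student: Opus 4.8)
The plan is to prove the two formulas by showing that their right-hand sides satisfy the same recurrence and initial data as the $q_n$, and then to invoke uniqueness of the solution of the recurrence. First I would record the recurrence of this section in the convenient form $q_{n+1}(s)=s\,q_n^\ast(s)-q_{n-1}(s)$ for $n\geq1$, together with $q_0(s)=1$ and $q_1(s)=s-\omega$ (the latter being the $n=0$ instance, since $q_0^\ast=1$). The one structural observation needed is that passing from $q_n$ to $q_n^\ast$ amounts to interchanging $\omega$ and $\overline\omega$: because the Chebyshev polynomials $T_m$ and $U_m$ have real coefficients, conjugating the coefficients in the right-hand sides of \eqref{eq:q_even_cheb} and \eqref{eq:q_odd_cheb} simply swaps $\overline\omega\leftrightarrow\omega$ in the scalar prefactors.

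With this in hand, I would denote by $r_{2n}(s)$ and $r_{2n+1}(s)$ the right-hand sides of \eqref{eq:q_even_cheb} and \eqref{eq:q_odd_cheb}. The base cases $r_0=1$ and $r_1=s-\omega$ follow from $T_0=1$ and $U_1(s/2)=s$, after the numerators collapse against the denominator $2-s^2$. For the inductive step I would substitute the formulas into $r_{m+1}=s\,r_m^\ast-r_{m-1}$, treating the even-index transition $r_{2n}=s\,r_{2n-1}^\ast-r_{2n-2}$ and the odd-index transition $r_{2n+1}=s\,r_{2n}^\ast-r_{2n-1}$ separately, and clear the common denominator $2-s^2$. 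Splitting each resulting identity into its part independent of $\omega$ and the part proportional to $\omega$ (resp.\ $\overline\omega$), every piece reduces to a pure identity among Chebyshev polynomials evaluated at $s/2$. These are verified using only the recurrence \eqref{eq:cheb_recur} in the forms $s\,T_m(s/2)=T_{m+1}(s/2)+T_{m-1}(s/2)$ and $s\,U_m(s/2)=U_{m+1}(s/2)+U_{m-1}(s/2)$, together with the standard relation $2T_m(s/2)=U_m(s/2)-U_{m-2}(s/2)$; for instance the $\overline\omega$-part of the even step reduces to $(s^2-2)U_{2n-1}(s/2)=2U_{2n+1}(s/2)-2s\,T_{2n}(s/2)$, which follows by two applications of the $U$-recurrence. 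Once both $r_{2n}$ and $r_{2n+1}$ are shown to satisfy the recurrence, the uniqueness of the forward recurrence forces $r_n=q_n$ for all $n$.

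I expect the main obstacle to be purely computational bookkeeping: because the prefactors carry the denominator $2-s^2$, the intermediate quantities are rational rather than polynomial, and each inductive step shifts the Chebyshev indices by two, so several neighbouring Chebyshev terms must be combined before the denominator cancels. The cleanest way to keep this under control is to pass to the Joukowsky variable $s=\xi+\xi^{-1}$ and use the monomial expressions \eqref{eq:cheb_id1}: then $2-s^2=-(\xi^2+\xi^{-2})$, every Chebyshev term becomes an explicit power of $\xi$, and the required relations become elementary rational-function identities in $\xi$ in which the cancellation of the denominator is manifest.
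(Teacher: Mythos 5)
Your proposal is correct and follows essentially the same route as the paper: both arguments verify that the right-hand sides satisfy the recurrence $sq_n^\ast = q_{n+1}+\omega\delta_{n,0}q_n+q_{n-1}$ with the correct initial data and conclude by uniqueness of the forward recurrence. The Joukowsky substitution $s=\xi+\xi^{-1}$ that you mention at the end as the cleanest bookkeeping device is precisely what the paper does from the outset, writing the right-hand sides as $A_\xi(\cdot)\xi^{n}+B_\xi(\cdot)\xi^{-n}$ so that the recurrence check becomes immediate.
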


\begin{proof}
We employ the Joukowsky transform once more and write $s=\xi+\xi^{-1}$ in expressions on right hand sides of \eqref{eq:q_even_cheb} and \eqref{eq:q_odd_cheb}. With the aid of~\eqref{eq:cheb_id1}, we find that these expressions then equal
\[
 A_{\xi}(\overline{\omega})\xi^{2n}+B_{\xi}(\overline{\omega})\xi^{-2n}=:r_{2n}(\xi)
\]
and
\[
 A_{\xi}(\omega)\xi^{2n+1}+B_{\xi}(\omega)\xi^{-2n-1}=:r_{2n+1}(\xi),
\]
with
\[
A_{\xi}(\omega):=\frac{\xi-\omega}{\xi-\xi^{-1}} 
\quad\mbox{ and }\quad
B_{\xi}(\omega):=\frac{\omega-\xi^{-1}}{\xi-\xi^{-1}}.
\]
It follows easily from the structure of $r_{n}(\xi)$ that the recurrence
\[
  r_{n-1}(\xi)+\omega\delta_{n,0}r_{n}(\xi)+r_{n+1}(\xi)=(\xi+\xi^{-1})\overline{r_{n}}(\xi),
\]
with $r_{-1}:=0$, holds for all $n\in\bbZ_{+}$. Since also $q_{0}=r_{0}=1$, we conclude that $q_{n}((\xi+\xi^{-1})/2)=r_{n}(\xi)$ for all $n\in\bbZ_{+}$, which implies \eqref{eq:q_even_cheb} and \eqref{eq:q_odd_cheb}.
\end{proof}

\begin{remark}
With the aid of identities 
\[
 T_{n}(2x^{2}-1)=T_{2n}(x) \quad\mbox{ and }\quad U_{2n+1}(x)=2xU_{n}(2x^{2}-1),
\]
see for example~\cite[p.~98]{sny_66}, formulas~\eqref{eq:q_even_cheb} and~\eqref{eq:q_odd_cheb} can be also written in the form
\[
 q_{2n}(s)=\frac{2-2\overline{\omega}s}{2-s^{2}}\,T_{n}\!\left(\frac{s^{2}}{2}-1\right)+\frac{2\overline{\omega}s-s^{2}}{2-s^{2}}\,U_{n}\!\left(\frac{s^{2}}{2}-1\right)
\]
and
\[
 q_{2n+1}(s)=-\frac{2\omega}{2-s^{2}}\,T_{n}\!\left(\frac{s^{2}}{2}-1\right)+\frac{2s+\omega s^{2}-s^{3}}{2-s^{2}}\,U_{n}\!\left(\frac{s^{2}}{2}-1\right).
\]
\end{remark}

\begin{remark}
If $\omega=0$, $q_{n}$ simplifies to the classical Chebyshev orthogonal polynomials, namely
\[
q_{n}(s)=U_{n}\left(\frac{s}{2}\right).
\]
This is obvious for the odd index from~\eqref{eq:q_odd_cheb}. To see that it is the case also for even indices, one needs to apply the identity
\[
T_{n}(x)+(2x^{2}-1)U_{n}(x)=xU_{n+1}(x)
\]
in~\eqref{eq:q_even_cheb}; see for example~\cite[Eqs.~(3),(4), Sec.~10.11]{erd-etal_81}.
Moreover, the spectral measure $\mu$ of $J_{\omega=0}$, when restricted to interval $[0,2]$, coincides with measure $\nu/2$.
\end{remark}

\bibliographystyle{acm}

\end{document}